\documentclass[12pt,a4paper,twoside]{article}
\usepackage{amsthm, amsfonts,amsmath, amssymb, color, bm}

\topmargin=-14 true mm
\oddsidemargin=-12 true mm
\evensidemargin=-12 true mm
\setlength{\textheight}{252 true mm}  
\setlength{\textwidth}{184 true mm}

\newtheorem{theorem}{Theorem}
\newtheorem{definition}{Definition}
\newtheorem{example}{Example}
\newtheorem{lemma}{Lemma}
\newtheorem{proposition}{Proposition}
\newtheorem{remark}{Remark}

\def\eq{\hspace*{-.1mm}&=&\hspace*{-.1mm}}

\def\Div{\operatorname{div}}

\newcommand\const{\operatorname{const}}
\def\R{\mathbb{R}}
\def\N{\mathbb{N}}

\title{Bochner's technique in Einstein's non-symmetric geometry}
\author{Vladimir Rovenski
\footnote{Department of Mathematics, University of Haifa, Mount Carmel, 3498838 Haifa, Israel
\newline e-mail: {\tt vrovenski@univ.haifa.ac.il}
}}

\date{}

\begin{document}

\maketitle

\abstract{
A. Einstein considered a manifold with a non-symmetric (0,2)-tensor $G=g+F$, where $g$ is a Riemannian metric and $F\ne0$, and a connection $\nabla$ with torsion $T$ such that $(\nabla_X G)(Y,Z)=-G(T(X,Y),Z)$.
Guided by the almost Lie algebroid construction on a vector bundle, we define the basic concepts of Bochner's technique for Einstein's non-symmetric geomet\-ry,
give a clear example of the Einstein's connection $\nabla$,
prove Weitzen\-b\"{o}ck type decomposition formula and obtain vanishing results about the null space of the Bochner and Hodge type~Laplacians.

\vskip1mm
{\bf Keywords}: Einstein's non-symmetric geometry,
curvature operator,
almost Lie algebroid;
Hodge and Bochner Laplacians,
Weitzenb\"{o}ck curvature operator;
singular distribution.

\vskip1mm
\textbf{Mathematics Subject Classifications (2010)} 53C15; 53C21
}


\section{Introduction}

Since 1923,
A. Einstein worked on various versions of the Unified Field Theory (Non-symmetric Gravitational Theory -- NGT), see~\cite{Ein}. This theory was intended to unite the gravitation theory, to which General Relativity (GR) is related, and the theory of electromagnetism.
Introducing various versions of his NGT, A. Einstein used a complex basic tensor, with a symmetric real part and a skew-symmetric imaginary part. Beginning in 1950, A. Einstein used a real non-symmetric basic (0,2)-tensor
$G$
whose symmetric part $g$
is associated with gravity, and the skew-symmetric part $F$
is associated with electromagnetism.
The same is true for the symmetric part of the connection and torsion tensor, respectively.
In his attempt to construct an unified field theory,
(Einstein's Gravitational Theory, briefly NGT), A.~Einstein \cite{Ein} considered a manifold $(M,G=g+F)$, 
where $g$ is a Riemannian metric and $F\ne0$, with a linear connection $\nabla$
with torsion $T$ such that
$(\nabla_X G)(Y,Z)=-G(T(X,Y),Z)$
 (called an \textit{Einstein's connection}),
or, equivalently, see \cite{IZ1,rst-63}:
\begin{equation}\label{metein}
 XG(Y,Z)-G(\nabla_YX,Z)-G(Y,\nabla_XZ)-G([X,Y],Z)=0
 \quad (X,Y,Z\in\mathfrak{X}_M) .
\end{equation}
A manifold $(M,G=g+F)$ equipped with an Einstein's connection $\nabla$ different from the Levi-Civita connection $\nabla^g$
is called an NGT space, or \textit{Einstein's non-symmetric geometry}.
The idea of a non-symmetric basic tensor was later revisited by J.W. Moffat \cite{Moffat-95}, allowing for richer gravi\-tational dynamics.
Results by T. Jansen and T. Prokopec \cite{JP-2007}
suggest potential improvements and challenges within NGT.
M.I.~Wanas et al. \cite{Wanas-24} studied
path equations of a particle in Einstein's non-symmetric geo\-metry.
Recent approaches to modified gravity often rely on differen\-tial geometry, incorporating torsion and non-metricity as natural extensions of GR framework.
S.~Ivanov and M.~Zlatanovi\'c~\cite{IZ1}
investigated NGT using connections with totally skew-symmetric torsion and gave examples with the skew-symmetric part $F$ of $G$.
V.~Rovenski and M.~Zlatanovi\'c~\cite{rst-63} were the first to apply weak $f$-structures, in particular, weak almost contact structures, to NGT of totally skew-symmetric torsion with
$F(X,Y) = g(X,fY)$.
Weak metric structures
(developed in recent works by V.~Rovenski, see surveys \cite{rov-survey24,rst-139})
generalize the classical structures and are well suited for a 
study of $G=g+F$ in NGT with an arbitrary skew-symmetric tensor~$F$.


The Bochner technique in differential geometry and mathematical physics is a powerful analytic method used to prove vanishing theorems
by relating topology of a complete Riemannian manifold to its curvature.
The Bochner technique works for skew-symmetric tensors lying in the kernel of the Hodge Laplacian
$\Delta_H=d\,\delta+\delta\,d$:
using maximum principles, one proves that such tensors are parallel, e.g.,~\cite{Peter,rov-125}.
Here $d$ is the exterior differential operator, and $\delta$ is its adjoint operator for the $L^2$ inner product.
The
elliptic differential operator $\Delta_H$
decomposes into two~terms,
\begin{equation}\label{E-Wei0}
 \Delta_H =\nabla^*\nabla+\Re,
\end{equation}
one is the \textit{Bochner Laplacian} $\nabla^*\nabla$, and the second term
(depends linearly on the Riemannian curvature tensor),
is called the \textit{Weitzenb\"{o}ck curvature operator} on $(0,k)$-tensors $S$, e.g.,~\cite{Peter}, where
\begin{eqnarray}\label{E-Ric}
 && \Re\,(S)(X_{1},\ldots,X_{k}) = \sum\nolimits_{a=1}^{k}\sum\nolimits_{\,i=1}^{n}
 (R_{\,e_{i},X_{a}}\,S)(\underbrace{X_{1},\ldots, e_{i}}_{a},\ldots, X_{k}).
\end{eqnarray}
Here
$\{e_{i}\}$ is a local orthonormal frame on $(M,G=g+F)$ and $\nabla^*$ is the $L^2$-adjoint of the Levi-Civita connection $\nabla$.
Note that $\Re$ reduces to ${\rm Ric}$
 when evaluated on (0,1)-tensors, i.e., $k=1$.
According to the well-known formula $(R_{Z,Y}\,S)(X_1,\ldots,X_k) = -\sum\nolimits_{\,i}S(X_1,\ldots R_{Z,Y}X_i,\ldots,X_k)$
for the action of the curvature tensor $R$ on $(0,k)$-tensors, for $k\ge2$ the formula from \eqref{E-Ric} has the form
\begin{align*}
 \Re(S)(X_{1},\ldots,X_{k})
 =& -2\sum\nolimits_{\,i,j,a;b<a}R(e_i,X_a,e_j,X_b)\cdot S(\underbrace{X_1,\ldots,e_j}_{b},\underbrace{\ldots, e_i}_{a-b},\ldots,X_k)\\
 & +\,\sum\nolimits_{\,i,a}{\rm Ric}(e_i,X_a)\cdot S(\underbrace{X_1,\ldots, e_i}_{a},\ldots, X_k).
\end{align*}
%
The formula \eqref{E-Wei0} allows us to extend the Hodge Laplacian to arbitrary tensors.


In \cite{rp-2,rp-3}, we expanded the Bochner's technique to a Riemannian manifold endowed with a singular or regular distribution and Levi-Civita or a statistical type connection,
and generalized some results
in~\cite{opozda1}.
 Singularities play a crucial role in mathematics
and its applications,
e.g., in theories of nonlinear control systems and GR.
A \textit{singular distribution} ${\cal D}$ on a manifold $M$ assigns to each point $x\in M$ a linear subspace ${\cal D}_x$
of the tangent space $T_xM$ in such a way that, for any $v\in{\cal D}_x$,
there exists a smooth vector field $V$ defined in a neighborhood $U$ of $x$
and such that $V(x) = v$ and $V(y)\in{\cal D}_y$ for all $y\in U$.
A~priori, the dimension $\dim{\cal D}_x$ depends on $x\in M$. If $\dim{\cal D}_x=\const$, then ${\cal D}$ is a regular distribution.
The distribution ${\cal D}={f}(TM)$ is singular in the sense of the definition given above.
Indeed, for any vector $v\in{f}(T_xM)$ there exists a vector $\tilde v\in T_xM$ such that ${f}(\tilde v)=v$.
Take a smooth vector field $\tilde V$ in a neighborhood $U$ of $x$ such that $\tilde V(x)=\tilde v$.
Then $V={f}(\tilde V)$ is a smooth vector field in $U$ and such that $V(x)=v$ and $V(y)\in{f}(T_y M)$ for all~$y\in U$.


In the paper, we consider Einstein's connections, see \eqref{metein}, satisfying the additional condition $\nabla g=0$,
hence the properties (ii) and (iii) of Lemma~\ref{L-nabla-g} below hold.
We develop the Bochner technique for NGT spaces:
the $f$-connection, $f$-divergence, $f$-curvature tensor,
$f$-Laplacians and Weitzenb\"{o}ck $f$-curvature operator (where $f$, see \eqref{E-def-f}, is conjugate to $F$).
Our results hold both  in the case where $f$ is non-degenerate and  in the case where $\ker f$ defines a singular
distribution (i.e, $\dim\ker f>0$ at some points), which applies to the weak metric $f$-structure, see~\cite{rov-survey24,rst-139}.


The work has the Introduction and two sections, the References include 16 items.
In~Section~\ref{sec:prel}, we recall the basics of Einstein's non-symmetric geometry and the almost Lie algebroid structure.
Section \ref{sec:main} consists of five subsections.
In Section~\ref{sec:nabla}, for a mani\-fold $(M,G=g+F)$ with an Einstein's connection $\nabla$,
we~define the derivative $\nabla^{f}$, the $f$-bracket and give a clear example of the Einstein's connection $\nabla$. 
In Section~\ref{sec:div}, we define the skew-symmetric $f$-divergence and their $L^2$ adjoint operators on tensors.
In Section~\ref{sec:laplace}, we introduce Bochner and Hodge $f$-Laplacians on tensors and differential forms.
In~Sections~\ref{sec:R} 
and \ref{sec:Ric}, we define 
the $f$-curvature operator $R^f$ of $\nabla^{f}$, the Weitzenb\"{o}ck type curvature operator on tensors,
prove Weitzen\-b\"{o}ck type decomposition formula
and obtain vanishing~results.

\section{Preliminaries}
\label{sec:prel}

Here, we discuss
Einstein's 
non-symmetric geometry
and the almost Lie algebroid~structure.


\subsection{Einstein's non-symmetric geometry}

The basic (0,2)-tensor $G$ in a ``generalized" Riemannian manifold $(M,G)$ is non-symmetric.
It~decomposes in two parts, $G=g+F$, the symmetric part $g>0$ (called Riemannian metric) and the
skew-symmetric part $F\ne0$ (called fundamental 2-form), where
\begin{equation}\label{metric}
g(X,Y)=\frac12\big[G(X,Y)+G(Y,X)\big], \quad F(X,Y)=\frac12\big[G(X,Y)-G(Y,X)\big].
 \end{equation}
Therefore, we obtain a
(1,1)-tensor ${f}\ne0$ (of not necessarily constant rank) determined by
\begin{equation}\label{E-def-f}
 g(X, fY) = F(X,Y)\quad (X, Y \in \mathfrak{X}_M).
\end{equation}
Since $F$ is skew-symmetric, the tensor ${f}$ is also skew-symmetric:
 $g({f}X, Y) = -g(X,{f}Y)$.
We~consider linear connections $\nabla$ on
$M$
with a torsion (1,2)-tensor
\[
 T(X,Y)=\nabla_XY-\nabla_YX-[X,Y].
\]
Denote the torsion (0,3)-tensor with respect to $g$ by the same letter,
 $T(X,Y,Z):=g(T(X,Y),Z)$.


 The covariant derivative an $(p,k)$-tensor $S$ for $p=0,1$, as an $(p,k+1)$-tensor $\nabla S$, see \cite{Peter}:
\begin{equation}\label{E-nablaP-S0}
 (\nabla S)(Y,X_{1},\ldots,X_{k}) =
 \nabla_{Y}(S(X_{1},\ldots,X_{k}))
 -\sum\nolimits_{\,i=1}^{k}S(X_{1},\ldots,
 \nabla_{Y}X_{i},\ldots,X_{k}) .
\end{equation}
Using \eqref{metric}, \eqref{E-def-f}
and \eqref{E-nablaP-S0},
we present the condition \eqref{metein} in the following form, see \cite{IZ1}:
\begin{align*}
 (\nabla\,g)(X, Y,Z)
 &=-\frac12\,\big[T(X,Y,Z) +T(X,Z,Y) +T(X,Y,{f}Z)+T(X,Z,{f}Y)\big],\\
(\nabla\,F)(Z, X,Y)&= \frac12\,\big[ T(X,Z,Y)-T(X,Y,Z) -T(X,Y,{f}Z)+T(X,Z,{f}Y)\big].
\end{align*}
The above equations yield, see  \cite[Eq.~(3.4)]{IZ1},
\begin{align}\label{ein7}
 (\nabla\,g)(Z, X, Y) = (\nabla\,F)(X,Y,Z) +(\nabla\,F)(Y,X,Z).
\end{align}

\begin{lemma}\label{L-nabla-g}
For an Einstein's connection $\nabla$, the following conditions are equivalent:

$(i)$ $\nabla g=0$,

$(ii)$ $K(X,Y,Z)=-K(X,Z,Y)$,

$(iii)$ $(\nabla_X F)(Y,Z)=-(\nabla_Y F)(X,Z)$,
or $(\nabla_X f)Y=-(\nabla_Y f)X$.
\end{lemma}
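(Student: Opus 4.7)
The plan is to read off the equivalences directly from the two displayed identities that precede the lemma, avoiding any separate computation in local coordinates. The key observation is that the tensor $K$ in (ii) must be the combination $K(X,Y,Z):=T(X,Y,Z)+T(X,Y,{f}Z)$ appearing on the right-hand side of the first displayed formula for $(\nabla g)$: symmetrizing that formula in its last two slots gives
\[
 (\nabla g)(X,Y,Z) = -\tfrac12\bigl[K(X,Y,Z)+K(X,Z,Y)\bigr].
\]
Hence the condition $\nabla g=0$ is equivalent to the skew-symmetry of $K$ in $(Y,Z)$, which is statement (ii). So (i) $\Leftrightarrow$ (ii) is essentially tautological.

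Next, the equivalence (i) $\Leftrightarrow$ (iii) follows at once from the identity \eqref{ein7}: rewriting it as
\[
 (\nabla g)(Z,X,Y)=(\nabla_X F)(Y,Z)+(\nabla_Y F)(X,Z),
\]
we see that $\nabla g=0$ holds for all $X,Y,Z$ iff $(\nabla_X F)(Y,Z)=-(\nabla_Y F)(X,Z)$.

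The only remaining piece is to recast the first form of (iii) into the second form involving ${f}$. Starting from $F(Y,Z)=g(Y,{f}Z)$ and applying \eqref{E-nablaP-S0}, one gets
\[
 (\nabla_X F)(Y,Z) = X\bigl(g(Y,{f}Z)\bigr) - g(\nabla_X Y,{f}Z) - g(Y,{f}\nabla_X Z).
\]
Assuming $\nabla g=0$ (which, as already shown, is equivalent to (iii)), the first two terms combine to $g(Y,\nabla_X({f}Z))$, so that $(\nabla_X F)(Y,Z)=g(Y,(\nabla_X{f})Z)$. A short check, by differentiating the identity $g({f}A,B)+g(A,{f}B)=0$ with $\nabla g=0$, shows $\nabla_X{f}$ is again skew-symmetric relative to $g$. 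Therefore
\[
 (\nabla_X F)(Y,Z)+(\nabla_Y F)(X,Z) = -g\bigl((\nabla_X{f})Y+(\nabla_Y{f})X,\,Z\bigr),
\]
and the vanishing of the left-hand side for all $Z$ is equivalent to $(\nabla_X{f})Y=-(\nabla_Y{f})X$, giving the last equivalence.

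I do not expect any serious obstacle; the proof is a bookkeeping exercise on two identities already displayed in the paper, together with the standard translation between $F$ and ${f}$ via $g$. The one point that requires a little care is the skew-symmetry of $\nabla_X{f}$, needed to bring the two forms in (iii) into the precise shape stated; this is routine once $\nabla g=0$ is in force.
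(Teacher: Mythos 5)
Your handling of (i) $\Leftrightarrow$ (iii) follows the same route as the paper (which simply invokes \eqref{ein7}), and your translation between the $F$-form and the $f$-form of (iii) is a useful addition the paper leaves implicit. The genuine problem is your (i) $\Leftrightarrow$ (ii) step: the tensor $K$ in (ii) is not the combination $T(X,Y,Z)+T(X,Y,fZ)$ that you postulate. It is the difference tensor of \eqref{eq:contorsion-def}, $K_XY=\nabla_XY-\nabla^g_XY$ with $K(X,Y,Z)=g(K_XY,Z)$, and for an Einstein's connection it is given by \eqref{E-tordfnew}: $2\,K(X,Y,Z)=T(X,Y,Z)+T(X,Z,fY)+T(Y,Z,fX)$, which is a different tensor from your guess (only their symmetrizations in the last two arguments coincide). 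So, as written, you prove that $\nabla g=0$ is equivalent to skew-symmetry of another object, not to statement (ii). The paper's argument is shorter and avoids the issue entirely: from the definition of $K$ and $\nabla^g g=0$ one gets $(\nabla_X g)(Y,Z)=-K(X,Y,Z)-K(X,Z,Y)$, which gives (i) $\Leftrightarrow$ (ii) at once. Your route can be repaired, but only by supplying the missing bridge: by \eqref{E-tordfnew} and the skewness of $T$ in its first two arguments, $2\,[K(X,Y,Z)+K(X,Z,Y)]$ equals the symmetrization of your combination in $(Y,Z)$, so the two skew-symmetry conditions agree; without this (or without using the correct definition of $K$) the proposal does not prove the lemma as stated.

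A smaller caveat: your passage between the two versions of (iii) is carried out under the standing assumption $\nabla g=0$, so it yields (i) $\Rightarrow$ ($f$-version) but not, as written, the converse. If the ``or'' in (iii) is meant as a genuinely equivalent reformulation, use the unconditional identity $(\nabla_X F)(Y,Z)=(\nabla_X g)(Y,fZ)+g(Y,(\nabla_X f)Z)$ together with \eqref{ein7} to argue both directions; here you in fact go beyond the paper, whose proof does not address the $f$-version at all.
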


\begin{proof}
 We calculate using \eqref{E-nablaP-S0} and \eqref{eq:contorsion-def},
\begin{align*}
 (\nabla_X\,g)(Y,Z) &
 = X(g(Y,Z)) - g(\nabla^g_X Y+ K_XY, Z) - g(Y, \nabla^g_X Z+ K_XZ) \\
 & = (\nabla^g_X\,g)(Y,Z) - g(K_XY, Z) - g(K_XZ, Y).
\end{align*}
By this and $\nabla^g\,g=0$, the conditions (i) and (ii)
are equivalent.
By the equality \eqref{ein7},
the conditions (i) and (iii) are equivalent.
\end{proof}

The \emph{difference} tensor \( K \)
of a linear connection $\nabla$
and
$\nabla^g$ is given~by
\begin{equation}\label{eq:contorsion-def}
 K_X Y := \nabla_X Y - \nabla^g_X Y.
\end{equation}
It is called the \textit{contorsion tensor} when $\nabla$ preserves the metric tensor, $\nabla g=0$, in Riemann-Cartan geometry.
Denote the torsion (0,3)-tensor with respect to $g$ by the same letter, $K(X,Y,Z):=g(K_X Y, Z)$.
In the case $\nabla g=0$, the tensors $K$ and $T$ are expressed linearly through each~other:
\begin{align}\label{E-tordfnew2}
  T(X,Y) &= K_X Y - K_Y X , \\
 \label{E-tordfnew3}
  2\,K(Y,Z,X) &= T(X,Y,Z) + T(Y,Z,X) + T(X,Z,Y).
\end{align}

The Einstein's connection $\nabla$
is represented in \cite{IZ1} as
\begin{align}\label{genconein}
 g(\nabla_XY,Z) &=g(\nabla^g_XY,Z) +\frac12\big[T(X,Y,Z) +T(X,Z,{f}Y) +T(Y,Z,{f}X)\big].
\end{align}
By \eqref{genconein}, the tensor $K$ and the torsion $T$
of an Einstein's connection $\nabla$ are related as
\begin{equation}\label{E-tordfnew}
 2\,K(X,Y,Z) = T(X,Y,Z)+T(X,Z,{f}Y) +T(Y,Z,{f}X) .
\end{equation}
Comparing \eqref{E-tordfnew3} and \eqref{E-tordfnew} yields the
condition for the torsion of an Einstein's connection
\begin{align*}
 T(Z,X,Y)
 + T(Z,Y,X) =
 T(X,Z,{f}Y) +T(Y,Z,{f}X).
\end{align*}


\begin{remark}
\rm
Linear connections with \textit{totally skew-symmetric torsion},
 $T(X,Y,Z)=-T(X,Z,Y)=-T(Y,X,Z)$,
have become popular due to the relations with mathematical physics. In this case, the number of preserved supersymmetries is equal to the number of parallel spinors with respect to such a connection.
S.~Ivanov S. and M.Lj.~Zlatanovi\'c, see \cite{IZ1}, presented conditions for the existence and uniqueness of the Einstein's connection with totally skew-symmetric torsion on a manifold $(M,G=g+F)$ and gave its explicit expression.
In this case, the torsion is determined by~$dF$:
 $T(X,Y,Z)=-\frac13dF(X,Y,Z)$,
and the difference tensor is given by the following formula:
\begin{equation}\label{newnbl-K}
K_XY=\frac12\big[T({f}X,Y)-T(X,{f}Y)+T(X,Y)\big].
\end{equation}
\end{remark}

\begin{example}\label{Ex-3.9a}
\rm
A \textit{weak almost Hermitian manifold} $(M, f, Q, g)$ is a Riemanni\-an manifold $(M, g)$ of dimension $2m \ge 4$ endowed with non-singular endomorphisms: $f$ (skew-symmetric) and $Q$ (self-adjoint), and the 2-form $F$
such that the following conditions are valid, see~\cite{rst-63}:
\begin{equation}\label{WAH}
f^2 = -Q, \quad g(fX, fY) = g(QX, Y),
\quad F(X,Y)=g(X,fY).
\end{equation}
Let $(M, f, Q, g)$ be a \textit{weak nearly K\"{a}hler manifold},
i.e., \eqref{WAH} and $(\nabla^g_X f)X=0$ are true,
considered as a generalized Riemannian manifold $(M, G = g + F)$.
Suppose that $\nabla$ is an Einstein's connection
on $(M, f, Q, g)$ with totally skew-symmetric torsion.
Then $\nabla$ satisfies the $f$-torsion condition:
$T(fX, Y) = T(X,fY)$,
and torsion of $\nabla$ is determined by~$dF$ as
$T(X,Y,Z){=}-\frac13dF(X,Y,Z)$,
see Example 3.4 in~\cite{rst-63}. 
In~this case, \eqref{newnbl-K} reduces to
$K_XY=\frac12\,T(X,Y)$, and our Einstein's connection $\nabla$ is recovered by the equality
$\nabla_X Y=\nabla^g_X Y + \frac12\,T(X,Y)$.
\end{example}

\begin{remark}\label{R-02}
\rm
A linear connection ${\nabla}$ on a Riemannian manifold $(M,g)$ is said to be \textit{statistical}
if it is torsionless and tensor ${\nabla} g$ is symmetric in all its entries,
equivalently, the cubic form $K(X,Y,Z)$
(associated with the difference tensor of $\nabla$)
is symmetric.
The statistical structure has applications in theory of probability and statistics as well as in information geometry.
B.~Opozda \cite{opozda1} developed Bochner's technique for the statistical structure on a Riemannian manifold.
In \cite{rp-3}, we
generalized Bochner's technique to
a Riemannian manifold equipped with a regular or singular distribution and a statistical type connection.
Unfortunately, we cannot directly apply the statistical structure to Einstein's non-symmetric geometry: if the cubic form $K(X,Y,Z)$ is symmetric, then by \eqref{E-tordfnew} and~\eqref{E-tordfnew2}, $T=K=0$ is true; hence, $\nabla=\nabla^g$.
\end{remark}

For an Einstein's connection on a manifold $(M,G=g+F)$, the tensor $K(X,Y,Z)$ is not totally symmetric, see Remark~\ref{R-02}.
Thus, it is interesting to study its particular symmetries.
The~skew-symmetry of $K_X$
i.e., $K(X,Y,Z)=-K(X,Z,Y)$, by \eqref{E-tordfnew}, reduces~to
\begin{align}\label{E-cond-E2}
 T(X,Y,Z)+T(X,Z,Y) + T(X,Z,fY)+T(X,Y,fZ)=0 .
\end{align}
Using \eqref{E-cond-E2}, we find that $[K_X,K_Y]:TM\to TM$ is a skew-symmetric endomorphism:
\begin{align}\label{E-cond-KKZ}
 [K_X, K_Y](Z,W)=-[K_X, K_Y](W,Z),\quad X,Y,Z,W\in\mathfrak{X}_M.
\end{align}

\begin{example}\rm
If a manifold $(M,G=g+F)$ admits an Einstein's connection $\nabla$ with totally skew-symmetric torsion,
and the $f$-{torsion condition}
holds: $T(fX, Y) = T(X,fY)$,
see~Example~\ref{Ex-3.9a}, then \eqref{E-cond-KKZ} is true.
Indeed, for such a $\nabla$ the $f$-{torsion condition} implies
\begin{equation}\label{Eq-A-T}
 T(fY, Z) = T(Y, fZ) = -f\,T(Y, Z).
 \end{equation}
 Using \eqref{Eq-A-T} and the totally skew-symmetric torsion property in the LHS of \eqref{E-cond-E2} gives zero.
\end{example}

Define the vector fields
\begin{align*}
E:=\sum\nolimits_{\,i} K_{e_{i}}e_{i},\quad
E^*:=\sum\nolimits_{\,i} K^*_{e_{i}}e_{i},
\end{align*}
on $M$, where $K^*_X$ is the conjugate tensor to $K_X$:
$g(K^*_X Y,Z)=g(Y,K_X Z)$, and $\{e_{i}\}$ is a local orthonormal frame. Note that \eqref{E-cond-E2} yields the equality $E^*=-E$.

\subsection{The almost Lie algebroid structure}
\label{sec:algebroid}

Here, we briefly recall the construction of an almost Lie algebroid, following \cite{PMAlg,rp-2}. Lie algebroids
constitute an active field of research in differential geometry. Roughly speaking, an (almost) Lie algebroid is a structure, where one replaces the tangent bundle $TM$ of a manifold $M$ with a new smooth vector bundle
$\pi_{\cal E}\colon {\cal E}\to M$ of rank $k$, i.e.,
with a fibre $\R^k$, over $M$ with similar properties.
Many geometrical notions, which involve $TM$, were generalized to the context of Lie algebroids.

An \textit{anchor} on ${\cal E}$ is a morphism $\rho\colon {\cal E}\to TM$ of vector bundles.
 A~\textit{skew-symmetric bracket} on ${\cal E}$ is a map
$[\,\cdot,\,\cdot\,]_{\rho}\colon {\mathfrak X}_{\cal E}\times\mathfrak{X}_{\cal E}\to{\mathfrak X}_{\cal E}$ such~that
\begin{equation}\label{E-anchor}
 [Y,X]_{\rho}=-[X,Y]_{\rho},\quad
 [X, \psi Y]_{\rho}=\rho(X)(\psi)Y+\psi[X,Y]_{\rho},\quad
 \rho([X,Y]_{\rho})=[\rho(X),\rho(Y)]
\end{equation}
for all $X,Y\in{\mathfrak X}_{\cal E}$ and $\psi\in C^\infty(M)$.
The anchor and the skew-symmetric bracket give \textit{an almost Lie algebroid structure} on ${\cal E}$.
The tensor $\mathcal{J}_{\rho}: {\mathfrak X}_E\times{\mathfrak X}_{\cal E}\times{\mathfrak X}_{\cal E}\to{\mathfrak X}_{\cal E}$, given by
 $\mathcal{J}_{\rho}(X,Y,Z)=\sum\nolimits_{\,\mathrm{cycl.}}[X,[Y,Z]_{\rho}\,]_{\rho}$,
called the \emph{Jacobiator} of the bracket, prevents a bracket to satisfy the Jacobi identity.
By~\eqref{E-anchor}, third formula, we get $\rho\,\mathcal{J}_{\rho}=0$.
An almost Lie algebroid is a {Lie algebroid} provided that~$\mathcal{J}_{\rho}=0$.
%
Almost Lie algebroids are closely related to singular~distributions, i.e., given by $\rho({\cal E})$, see \cite{rp-2}.
Axiom \eqref{E-anchor}, third formula, is equivalent to vanishing of the following operator:
\begin{equation}\label{E-D-rho}
  \mathfrak{D}^\rho(X,Y) = [\rho X, \rho Y] - \rho([X,Y]_\rho).
\end{equation}
 There is a bijection between almost Lie algebroids on ${\cal E}$ and the exterior differentials of the exterior algebra
 $\Lambda({\cal E})=
 \bigoplus_{\,k\in\N}\,\Lambda^{k}({\cal E})$,
here $\Lambda^{k}({\cal E})$ is the set of $k$-forms over~${\cal E}$. The exterior differential $d^\rho$, corresponding to the almost Lie algebroid structure $({\cal E},\rho,[\,\cdot,\,\cdot\,]_\rho)$, is given~by
\begin{align}\label{E-1deg-der-rho}
\notag
 & d^\rho\,\omega(X_{0},\ldots,X_{k}) = \sum\nolimits_{\,i=0}^k(-1)^{i}(\rho X_{i})
 (\omega(X_{0},\ldots,X_{i-1},X_{i+1},\ldots,X_{k})) \\
 &+\sum\nolimits_{\,0\le i<j\le k}(-1)^{i+j}\omega
 ([X_{i},X_{j}]_\rho,X_{0},\ldots,X_{i-1},X_{i+1},\ldots,X_{j-1},X_{j+1},\ldots,X_{k}),
\end{align}
where $X_{0},\ldots,X_{k}\in{\mathfrak X}_{\cal E}$ and $\omega\in\Lambda^{k}({\cal E})$ for $k\ge0$.
For $k=0$, we have
 $d^\rho f(X)=(\rho X)(f)$, where $X\in{\mathfrak X}_{\cal E}$ and $f\in C^\infty(M)=\Lambda^{0}({\cal E})$.
Recall that a skew-symmetric bracket defines uniquely an exterior differential $d^{\rho}$ on $\Lambda(M)$,
and it gives rise to

\noindent\
-- an \textit{almost Lie algebroid} if and only if $(d^{\rho})^{2}f=0$ for $f\in C^\infty(M)$;

\noindent\
-- a \textit{Lie algebroid} if and only if $(d^{\rho})^{2}f=0=(d^{\rho})^{2}\,\omega$ for $f\in C^\infty(M)$ and $\omega\in\Lambda^{1}(M)$.

\rm
A \textit{$\rho$-connection} on $({\cal E},\rho)$ is a map $\nabla^\rho:\mathfrak{X}_{\cal E}\times\mathfrak{X}_{\cal E}\to\mathfrak{X}_{\cal E}$ satisfying Koszul~conditions
\begin{equation}\label{E-rho-conn}
 \nabla^\rho_{X}\,(fY+Z)=\rho(X)(f)Y +f\nabla^\rho_X\,Y +\nabla^\rho_{X}\,Z,\quad
 \nabla^\rho_{fX+Z}\,Y=f\nabla^\rho_{X}\,Y +\nabla^\rho_{Z}\,Y.
\end{equation}
For a $\rho$-connection $\nabla^\rho$ on ${\cal E}$, they define \textit{torsion} $T^\rho:\mathfrak{X}_{\cal E}\times\mathfrak{X}_{\cal E}\to\mathfrak{X}_{\cal E}$
and \textit{curvature} $R^\rho:\mathfrak{X}_{\cal E}\times\mathfrak{X}_{\cal E}\times\mathfrak{X}_{\cal E}\to\mathfrak{X}_{\cal E}$ by standard formulas
\begin{eqnarray*}
 T^\rho(X,Y)=\nabla^\rho_X\,Y-\nabla^\rho_Y\,X -[X,Y]_\rho,\quad
 R^\rho_{X,Y} = \nabla_{X}^\rho\nabla^\rho_{Y}
 -\nabla^\rho_{Y}\nabla^\rho_{X}
 -\nabla^\rho_{\,[X,Y]_\rho}.
\end{eqnarray*}


We will apply the construction of an almost Lie algebroid to
a manifold $(M,G=g+F)$
with an Einstein's connection, namely,
we assume ${\cal E}=TM$ and $\rho=f$,
see \eqref{E-def-f}.


\section{Main results}
\label{sec:main}

Here, motivated by the almost Lie algebroid structure,
we develop Bochner's technique for NGT spaces.
In Example~\ref{Ex-main1}, using the condition \eqref{E-condPP}, we explicitly obtain an Einstein's~connection.

\subsection{The modified covariant derivative and bracket}
\label{sec:nabla}

\begin{definition}\rm
Given an Einstein's connection $\nabla$,
on a manifold $(M, G=g+F)$,
define the map $\nabla^{f}:\mathfrak{X}_{M}\times\mathfrak{X}_{M}\to\mathfrak{X}_{M}$:
\begin{equation}\label{nablaP}
 \nabla^{f}_{X}\,Y :=\nabla^{0}_X\,Y + K_X Y,
\end{equation}
called \textit{${f}$-connection}.
The $\nabla^{f}$ depends on
the map $\nabla^{0}:\mathfrak{X}_{M}\times\mathfrak{X}_{M}\to\mathfrak{X}_{M}$ given by
\begin{align*} 
 \nabla^{0}_X\,Y := \nabla^g_{{f}X}\,Y,
\end{align*}
and the difference tensor $K$,
see \eqref{eq:contorsion-def},
but generally $\nabla^{f}$ and $\nabla^{0}$ are not linear connections on~$M$.
Set~$\nabla^{f}_{X}\,\psi=({f}X)\psi = d\psi({f}X)$
for the ${f}$-\textit{gradient} of $\psi\in C^1(M)$.
The ${f}$-derivative of an $(p,k)$-tensor $S$, where $p=0,1$,
is the following $(p,k+1)$-tensor:
\begin{align}\label{E-nablaP-S}
 (\nabla^{f} S)(Y,X_{1},\ldots,X_{k}) {=} \nabla^{f}_{Y}(S(X_{1},\ldots,X_{k}))
 {-}\sum\nolimits_{\,i=1}^{k}S(X_{1},\ldots,
 \nabla^{f}_{Y}X_{i},\ldots,X_{k}) .
\end{align}
\end{definition}

Such $\nabla^{f}$ (as well as $\nabla^{0}$) satisfies Koszul conditions,
see \eqref{E-rho-conn},
\begin{equation*}
 \nabla^{f}_{X}(\psi Y+Z)=({f}X)(\psi)Y
 +\psi\nabla^{f}_X\,Y +\nabla^{f}_{X}\,Z,\quad \nabla^{f}_{\psi X+Z}\,Y =
 \psi\nabla^{f}_{X}\,Y+\nabla^{f}_{Z}\,Y.
\end{equation*}
We use the standard notation
$\nabla^{f}_Y\,S=(\nabla^{f} S)(Y,\ldots)$.
A tensor $S$ is called $\nabla^{f}$-\textit{parallel} if $\nabla^{f}S = 0$.
Using \eqref{E-nablaP-S} and $\nabla^g\,g=0$, we obtain the following:
\begin{align}\label{Eq-proof-K}
 (\nabla^{f} g)(X, Y,Z) & =
 - K(X,Y,Z) - K(X,Z,Y) ,\\
\label{Eq-proof-KF}
 (\nabla^{f} F)(X, Y,Z) & = (\nabla^g\,F)({fX}, Y,Z) - K(X,Y,Z) - K(X,Z,Y) .
\end{align}
For an Einstein's connection $\nabla$,
using \eqref{Eq-proof-K}, \eqref{Eq-proof-KF},
and \eqref{E-tordfnew}, we calculate
\begin{align*}
 (\nabla^{f}\,g)(X, Y,Z)
 & = -\frac12 \big[T(X,Z,Y) +T(X,Y,Z) +T(X,Y,fZ)
 +T(X,Z,{f}Y)\big] ,\\
 (\nabla^{f}\,F)(X, Y,Z) &= (\nabla^g F)({fX}, Y,Z) \\
 & \quad -\frac12 \big[T(X,Z,Y) +T(X,Y,Z) +T(X,Y,fZ)
 +T(X,Z,{f}Y)\big] .
\end{align*}

The skew-symmetric ${f}$-\textit{bracket} $[\cdot,\cdot]_{f}:\mathfrak{X}_{M}\times\mathfrak{X}_{M}\rightarrow\mathfrak{X}_{M}$
is defined
by the formula
\begin{equation}\label{E-Pbracket2}
 [X,Y]_{f}:=\nabla^{f}_{X}\,Y -\nabla^{f}_{Y}\,X .
\end{equation}
By this, the $f$-connection $\nabla^{f}$ is torsionless and satisfies conditions \eqref{E-anchor} with $\rho=f$.

Following \eqref{E-D-rho} with $\rho=f$, we define the operator $\mathfrak{D}^{f}$ by the formula
\begin{equation*}
 \mathfrak{D}^{f}(X,Y) := [{f}X, {f}Y] - {f}\,[X,Y]_{f}.
\end{equation*}

\begin{proposition}
For a manifold $(M,G=g+F)$ with an Einstein's connection $\nabla$, the condition $\mathfrak{D}^{f} = 0$ is equivalent to the following:
\begin{equation}\label{E-condPP}
 (\nabla^g f)({f}X, Y) - (\nabla^g f)({f}Y, X)
 = f\,T(X,Y), \quad X,Y\in\mathcal{X}_M.
\end{equation}
\end{proposition}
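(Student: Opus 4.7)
The plan is to unwind both sides of the definition $\mathfrak{D}^{f}(X,Y) = [fX, fY] - f[X,Y]_{f}$ in terms of the Levi-Civita connection $\nabla^g$ and the torsion $T$, then compare the two expressions term by term.

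First I would rewrite the $f$-bracket using \eqref{nablaP} and the definition of $\nabla^0$. Since $\nabla^{f}_X Y = \nabla^g_{fX} Y + K_X Y$, we get
\begin{equation*}
 [X,Y]_{f} = \nabla^g_{fX} Y - \nabla^g_{fY} X + (K_X Y - K_Y X).
\end{equation*}
Applying \eqref{E-tordfnew2}, the last parenthesis becomes $T(X,Y)$, so $[X,Y]_{f} = \nabla^g_{fX} Y - \nabla^g_{fY} X + T(X,Y)$; hence
\begin{equation*}
 f\,[X,Y]_{f} = f\,\nabla^g_{fX} Y - f\,\nabla^g_{fY} X + f\,T(X,Y).
\end{equation*}

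Next I would expand $[fX, fY]$ using the torsion-freeness of the Levi-Civita connection, namely $[fX, fY] = \nabla^g_{fX}(fY) - \nabla^g_{fY}(fX)$, and then split each term via the Leibniz rule $\nabla^g_{fX}(fY) = (\nabla^g f)(fX, Y) + f\,\nabla^g_{fX} Y$, and similarly for the second. This yields
\begin{equation*}
 [fX, fY] = (\nabla^g f)(fX, Y) - (\nabla^g f)(fY, X) + f\,\nabla^g_{fX} Y - f\,\nabla^g_{fY} X.
\end{equation*}

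Subtracting the two displays, the terms $f\,\nabla^g_{fX} Y$ and $f\,\nabla^g_{fY} X$ cancel, leaving
\begin{equation*}
 \mathfrak{D}^{f}(X,Y) = (\nabla^g f)(fX, Y) - (\nabla^g f)(fY, X) - f\,T(X,Y),
\end{equation*}
from which the stated equivalence is immediate: $\mathfrak{D}^{f} = 0$ if and only if \eqref{E-condPP} holds.

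There is no real obstacle here; the only thing to watch is the bookkeeping between the two different connections that enter $\nabla^{f}$ (the Levi-Civita piece evaluated along $fX$, and the difference tensor $K$), and to correctly convert $K_X Y - K_Y X$ into $T(X,Y)$ via \eqref{E-tordfnew2}. Once both $[fX,fY]$ and $f[X,Y]_f$ are expressed as a Levi-Civita piece plus a correction, the Levi-Civita pieces annihilate, and the remaining corrections assemble into exactly the two sides of~\eqref{E-condPP}.
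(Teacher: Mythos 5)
Your proposal is correct and follows essentially the same route as the paper: express $[X,Y]_f$ via \eqref{nablaP} and \eqref{E-tordfnew2} as $\nabla^g_{fX}Y-\nabla^g_{fY}X+T(X,Y)$, expand $[fX,fY]$ by torsion-freeness of $\nabla^g$ and the Leibniz rule, and cancel the common terms to get $\mathfrak{D}^{f}(X,Y)=(\nabla^g f)(fX,Y)-(\nabla^g f)(fY,X)-f\,T(X,Y)$. No gaps.
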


\begin{proof}
Put $[X,Y]_{0}:=\nabla^{0}_{X}\,Y -\nabla^{0}_{Y}\,X$.
Using \eqref{E-Pbracket2}, \eqref{nablaP} and
\eqref{E-tordfnew2}, we have
\begin{align}\label{E-Pbracket-b}
\notag
 [X,Y]_{f}
 & =\nabla^g_{fX}Y-\nabla^g_{fY}X +K_XY-K_YX
 =\nabla^g_{fX}Y-\nabla^g_{fY}X +T(X,Y) \\
 & = [X,Y]_{0} + T(X,Y).
\end{align}
Using \eqref{E-Pbracket-b}, we find
\begin{eqnarray*}
 \mathfrak{D}^{f}(X,Y) \eq \nabla^g_{fX}(fY)
 -f\nabla^g_{fX}Y
 -\nabla^g_{fY}(fX) +f\nabla^g_{fY}X -f\,T(X,Y) \\
 \eq (\nabla^g f)({fX}, Y)-(\nabla^g f)({fY}, X)
 -f\,T(X,Y),
\end{eqnarray*}
and the required \eqref{E-condPP} follows.
\end{proof}

\begin{example}\label{Ex-main1}\rm
 If condition \eqref{E-condPP} is true, then we obtain an almost Lie algebroid structure on $M$
 with an anchor $f:TM\to M$ and a skew symmetric bracket $[\,\cdot,\,\cdot\,]_{f}$ on ${\cal E}=TM$ satisfying \eqref{E-anchor}.
 On the other hand, from \eqref{E-condPP}, we obtain the torsion $T$, hence, an example of the Einstein's connection~$\nabla$.
\end{example}






For an ${f}$-connection $\nabla^{f}$ on $(M,G=g+F)$, its \textit{dual ${f}$-connection} $\breve\nabla^{f}$ is defined~by
\[
 {f}X(g(Y,Z)) = g(\nabla^{f}_X Y, Z) + g(Y, \breve\nabla^{f}_X Z).
\]
In~turn, the ${f}$-connection $\nabla^{f}$ is dual to $\breve\nabla^{f}$.
One may show that the following equality:
\begin{equation}\label{E-brave-nabla}
 \breve\nabla^{f}_X Y = \nabla^{0}_{X}Y - K^*_X Y
\end{equation}
 is true in general.
For a dual ${f}$-connection $\breve\nabla^{f}$, we define the ${f}$-bracket and $\breve{\mathfrak{D}}^{f}$~by
\[
 {[X,Y]}^\smile_{f} :=\breve\nabla^{f}_{X}\,Y-\breve\nabla^{f}_{Y}\,X,\quad
 \breve{\mathfrak{D}}^{f}(X,Y) :=
 [{f}X, {f}Y] - {f}{[X,Y]}^\smile_{f}.
\]
If \eqref{E-cond-E2} holds,
then using \eqref{E-brave-nabla}, we get
$\breve\nabla^{f}_X Y = \nabla^{f}_{X}Y$; hence, the following is true:
\begin{equation}\label{E-bracket-b}
 {[X,Y]}^\smile_{f}=[X,Y]_{f},\quad
 \breve{\mathfrak{D}}^{f} = \mathfrak{D}^{f}.
\end{equation}
For any $k$-form $\omega$, set
\begin{equation}\label{E-stat-L1-0}
 (K_{Y}\,\omega)(X_1,X_2,\ldots,X_k)
 := -\sum\nolimits_{\,i} \omega(X_{1},\ldots,K_{Y}X_{i},\ldots,X_{k}).
\end{equation}

The following lemma is similar to Lemmas~6.2 and 6.3 in \cite{opozda1}.

\begin{lemma}
Let $\nabla$ be an Einstein's connection on a manifold $(M,G=g+F)$,
$\omega\in\Lambda^k(M)$ and $\{e_{i}\}$ a~local orthonormal frame.
If condition \eqref{E-cond-E2} is true, then we have
\begin{equation}\label{E-stat-L1-b}
 \sum\nolimits_{\,i}\iota_{\,e_i}((K_{e_{i}}+K^*_{e_{i}})\,\omega) = 0 ,
\end{equation}
where $\iota$ stands for the interior product.
\end{lemma}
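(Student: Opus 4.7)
The plan is to unfold the definition in \eqref{E-stat-L1-0} and verify the vanishing pointwise; no integration by parts or global machinery is needed. The key observation is that under \eqref{E-cond-E2}, Lemma~\ref{L-nabla-g}(ii) gives $K(X,Y,Z)=-K(X,Z,Y)$, so the defining relation $g(K^*_XY,Z)=g(Y,K_XZ)$ yields $K^*_X=-K_X$ as an endomorphism of $T_xM$. In particular $(K_{e_i}+K^*_{e_i})\,\omega$ is zero already for each individual $i$. I will nevertheless present the proof in a form parallel to Lemmas~6.2--6.3 of \cite{opozda1}, decomposing the contraction into the two natural types of terms, since the same bookkeeping is likely to be reused in the Bochner-type identities that follow.

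First I rewrite the left-hand side pointwise: by the definition of the interior product, the quantity $\bigl(\sum_i \iota_{e_i}((K_{e_i}+K^*_{e_i})\omega)\bigr)(X_1,\ldots,X_{k-1})$ equals $\sum_i ((K_{e_i}+K^*_{e_i})\omega)(e_i,X_1,\ldots,X_{k-1})$. Next I apply \eqref{E-stat-L1-0} inside each summand, splitting according to which argument the operator $K_{e_i}+K^*_{e_i}$ acts on. This yields two groups of terms: (a) the single term where it acts on the leading slot occupied by $e_i$, and (b), for each $j\in\{1,\ldots,k-1\}$, the term where it acts on $X_j$.

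Group (a) collapses to $-\omega\bigl(\sum_i(K_{e_i}+K^*_{e_i})e_i,\,X_1,\ldots,X_{k-1}\bigr)=-\omega(E+E^*,X_1,\ldots,X_{k-1})$, and vanishes by the identity $E^*=-E$ recorded just before the lemma, itself a consequence of \eqref{E-cond-E2}. For group (b), I fix $j$ and expand $(K_{e_i}+K^*_{e_i})X_j$ in the frame $\{e_l\}$ as $\sum_l\bigl[K(e_i,X_j,e_l)+K(e_i,e_l,X_j)\bigr]\,e_l$, using the defining relation of $K^*$. By Lemma~\ref{L-nabla-g}(ii) the bracketed coefficient is identically zero, so the $j$-th contribution vanishes as well.

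I do not foresee a significant obstacle. The only care needed is the sign bookkeeping in \eqref{E-stat-L1-0}, and the edge case $k=1$ in which group (b) is empty and the lemma reduces to the single identity $E+E^*=0$. Both vanishings are purely algebraic once one remembers that \eqref{E-cond-E2} forces $K^*_X=-K_X$ pointwise; presenting the two-group decomposition is mainly a matter of matching the style of \cite{opozda1}.
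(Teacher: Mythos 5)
Your proposal is correct, and its core observation actually short-circuits the computation: since \eqref{E-cond-E2} is (via \eqref{E-tordfnew}) precisely the skew-symmetry $K(X,Y,Z)=-K(X,Z,Y)$, the defining relation $g(K^*_XY,Z)=g(Y,K_XZ)$ gives $K^*_X=-K_X$ pointwise, so each operator $K_{e_i}+K^*_{e_i}$ vanishes identically and every summand $(K_{e_i}+K^*_{e_i})\,\omega$ is already zero before any contraction or summation. The paper argues less directly: it expands the left-hand side as you do, kills the leading-slot group via $E+E^*=0$, and then, for each fixed slot $p$, antisymmetrizes the remaining terms over pairs $i<j$ (exploiting the skew-symmetry of $\omega$) and shows the coefficient $g((K_{e_i}+K^*_{e_i})X_p,e_j)-g((K_{e_j}+K^*_{e_j})X_p,e_i)$ vanishes by \eqref{E-tordfnew} and \eqref{E-cond-E2} — which is the same algebraic fact you use, just applied after an unnecessary antisymmetrization. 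Your route is therefore a genuine simplification, though the parallel two-group bookkeeping you also present coincides with the paper's argument. One small inaccuracy: the skew-symmetry of $K_X$ under \eqref{E-cond-E2} is the reduction stated in the paper immediately before \eqref{E-cond-E2} (a consequence of \eqref{E-tordfnew} for an Einstein's connection), not literally Lemma~\ref{L-nabla-g}(ii), which only lists that condition as equivalent to $\nabla g=0$; the identity you need is the same, but the citation should point to the discussion around \eqref{E-tordfnew}--\eqref{E-cond-E2}.
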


\begin{proof}
We have
\begin{align*}
 \sum\nolimits_{\,i}(
 (K_{e_{i}}+K^*_{e_{i}})
 \,\omega)(e_{i},X_2,\ldots,X_k) =&
 -\omega(\sum\nolimits_{\,i}
 (K_{e_{i}}+K^*_{e_{i}})
 e_i, X_2,\ldots,X_k)\\
 & -\sum\nolimits_{\,i}\omega(e_i, X_2,\ldots,
 (K_{e_{i}}+K^*_{e_{i}}) X_p, \ldots,X_k) .
\end{align*}
 If $k=1$, the last term in the above does not appear.
 If $k>1$, we fix $2\le p\le k$ and~find
 \begin{align*}
 & \sum\nolimits_{\,i} \omega(e_i, X_2,\ldots,
 (K_{e_{i}}+K^*_{e_{i}})X_p,\ldots,X_k) \\
 & = \sum\nolimits_{\,i<j}
 \big\{
 g((K_{e_{i}}+K^*_{e_{i}})X_p, e_j)
 - g((K_{e_{j}}+K^*_{e_{j}})X_p
 \big\}
 \omega(e_i, X_2,\ldots, e_j,\ldots,X_k).
\end{align*}
Using \eqref{E-tordfnew} and the assumption \eqref{E-cond-E2} completes the proof of \eqref{E-stat-L1-b}.
\end{proof}

\subsection{The ${f}$-divergence and Bochner ${f}$-Laplacian}
\label{sec:div}


Define the ${f}$-\textit{divergence} of a vector field $X$ on a manifold $(M,G=g+F)$ by
\begin{equation}\label{E-Pdiv-1}
 \Div_{f}X = \operatorname{trace}(Y{\to}\,\nabla^{f}_{Y}\,X).
\end{equation}
The divergence of a $(1,k)$-tensor $S$ is defined by
$\Div_g S =\operatorname{trace}(Y\to\nabla^g_{Y} S)$, that is
\begin{equation}\label{eq:div2}
 (\Div_g S)(X_1,\ldots, X_k) = \sum\nolimits_{\,i}
 g((\nabla^g\,S)({e_i}, X_1,\ldots, X_k), e_i) ,
\end{equation}
using a local orthonormal frame $\{e_{i}\}$.
The divergence of a vector field $X$ can also be represented as follows:
\begin{equation}\label{E-divf-1}
 d(\iota_{X}\,d\operatorname{vol}_g) = ({\rm div}_g\,X )\,d\operatorname{vol}_g.
\end{equation}

The ``musical" isomorphisms $\sharp:T^*M\to TM$ and $\flat: TM\to T^*M$ are used for rank one tensors, e.g. if $\omega\in \Lambda^1(M)$
and $X\in {\mathfrak X}_M$ then $\omega(X)=g(\omega^\sharp,X)=g(\omega,X^\flat)=X^\flat(\omega^\sharp)$.
In order to generalize the Stokes Theorem,
 we formulate the following.

\begin{lemma}\label{L-03B}
On a manifold $(M,G=g+F)$ with an Einstein's connection $\nabla$,
the condition
\begin{equation}\label{E-cond-PP-stat}
 \Div_g {f} = (E^*)^\flat
\end{equation}
is equivalent to the following equality:
\begin{equation}\label{E-divf-4B}
 \Div_{f} X=\Div_g({f}X),\quad X\in{\mathcal X}_{M}.
\end{equation}
\end{lemma}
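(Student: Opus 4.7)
The plan is to compute $\Div_{f}X$ and $\Div_g(fX)$ separately in a local orthonormal frame $\{e_i\}$, and to show that their difference equals $(E^*)^\flat(X) - (\Div_g f)(X)$; since $X$ is arbitrary, the claimed equivalence then follows immediately.

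First I would expand $\Div_{f}X$ using the definitions \eqref{E-Pdiv-1} and \eqref{nablaP}, writing $\nabla^{f}_{e_i}X = \nabla^g_{fe_i}X + K_{e_i}X$, which gives
\[
 \Div_{f}X = \sum\nolimits_{\,i} g(\nabla^g_{fe_i}X,e_i) + \sum\nolimits_{\,i} g(K_{e_i}X,e_i).
\]
The second sum is immediately identified as $(E^*)^\flat(X)$: by definition of $K^{*}$ and $E^{*}$,
\[
 \sum\nolimits_{\,i} g(K_{e_i}X,e_i) = \sum\nolimits_{\,i} g(X, K^{*}_{e_i}e_i) = g(X, E^{*}) = (E^{*})^{\flat}(X).
\]

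Next, I would expand $\Div_g(fX)$ using the Leibniz rule for $\nabla^g$ acting on the $(1,1)$-tensor $f$ and the vector field $X$:
\[
 \Div_g(fX) = \sum\nolimits_{\,i} g((\nabla^g_{e_i}f)X,\,e_i) + \sum\nolimits_{\,i} g(f\,\nabla^g_{e_i}X,\,e_i),
\]
where by \eqref{eq:div2} the first sum is $(\Div_g f)(X)$. The key calculation, and the only mildly delicate step, is to show that the remaining derivative term agrees with the one appearing in $\Div_f X$, i.e.
\[
 \sum\nolimits_{\,i} g(f\,\nabla^g_{e_i}X,\,e_i) = \sum\nolimits_{\,i} g(\nabla^g_{fe_i}X,\,e_i).
\]
This follows from the skew-symmetry $g(fY,Z) = -g(Y,fZ)$: setting $f_{ij}:=g(fe_i,e_j)$, one has $f_{ij} = -f_{ji}$, and both sides expand to $\sum_{\,i,j} f_{ij}\,g(\nabla^g_{e_j}X,e_i)$ after a relabeling.

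Combining these two expansions yields
\[
 \Div_{f}X - \Div_g(fX) = (E^{*})^{\flat}(X) - (\Div_g f)(X),
\]
and since this identity is valid for every $X\in\mathcal{X}_M$, condition \eqref{E-cond-PP-stat} is equivalent to \eqref{E-divf-4B}. I do not foresee a genuine obstacle: the proof is essentially a bookkeeping argument, with the skew-frame identity above being the only calculation that needs care.
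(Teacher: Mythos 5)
Your proposal is correct and follows essentially the same route as the paper: both hinge on the skew-symmetry identity $\sum_i g(\nabla^g_{fe_i}X,e_i)=\sum_i g(f\nabla^g_{e_i}X,e_i)=\Div_g(fX)-(\Div_g f)(X)$ together with $\sum_i g(K_{e_i}X,e_i)=g(X,E^*)$, yielding $\Div_f X-\Div_g(fX)=(E^*)^\flat(X)-(\Div_g f)(X)$ for all $X$. The only difference is organizational (you expand the two divergences separately and compare, while the paper substitutes directly into the definition of $\Div_f X$), which does not affect correctness.
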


\begin{proof}  Using
$fe_i=\sum\nolimits_{\,j}g({f}e_i, e_j)e_j$
and \eqref{eq:div2} with $S=f$,
we obtain
\begin{eqnarray*}
 && \sum\nolimits_{\,i}g(\nabla^g_{{f}e_i} X, e_i) = \sum\nolimits_{\,i,j}g({f}e_i, e_j)g(\nabla^g_{e_j} X, e_i) =
 \sum\nolimits_{\,i,j}g(e_i, -{f}e_j)g(\nabla^g_{e_j} X, e_i) \\
 && =\sum\nolimits_{\,j}g(\nabla^g_{e_j} X, -{f}e_j) = \sum\nolimits_{\,j}g({f}\nabla^g_{e_j} X, e_j)
 = \Div_g({f}X) - (\Div_g{f})(X).
\end{eqnarray*}
Using this and the definition \eqref{nablaP}, we have
\begin{equation*}
 \Div_{f} X = \sum\nolimits_{\,i}\,g(\nabla^g_{{f}e_i} X + K_{e_i}X, e_i) = \Div_g({f}X) - (\Div_g{f})(X)
 + g(X,E^*).
\end{equation*}
From this and \eqref{E-cond-PP-stat},
the required statement follows.
\end{proof}


Lemma~\ref{L-03B} allows us to extend the Divergence Theorem
to NGT spaces.

\begin{theorem}\label{T-P-Stokes}
Let an Einstein's connection $\nabla$ on a compact manifold $(M,G=g+F)$ with boun\-dary satisfy \eqref{E-cond-PP-stat}.
Then for any vector field $X\in\mathfrak{X}_M$  we have
\begin{equation}\label{E-Div-Th-P1}
 \int_M (\Div_{f}X)\,d\operatorname{vol}_{g} = \int_{\partial M}g(X,\,{f}\nu)\,\omega_{\,\hat g},
\end{equation}
where
$\nu$ is the unit inner normal to $\partial M$.
On a manifold $(M,G=g+F)$ without boundary, for any $X\in\mathfrak{X}_M$ with compact support,
\eqref{E-Div-Th-P1} reduces to the equality
 $\int_M (\Div_{f} X) d\operatorname{vol}_{g} {=} 0$.
\end{theorem}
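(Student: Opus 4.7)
The plan is to reduce the statement to the classical Divergence Theorem via Lemma~\ref{L-03B}, then convert the boundary integrand from $fX$ to $X$ using skew-symmetry of $f$.

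First, I would invoke Lemma~\ref{L-03B}. Because $\nabla$ satisfies the hypothesis \eqref{E-cond-PP-stat}, that lemma yields
\begin{equation*}
 \Div_{f} X = \Div_g ({f}X) \qquad (X\in\mathfrak{X}_M),
\end{equation*}
so integration gives $\int_M (\Div_{f}X)\,d\operatorname{vol}_g = \int_M \Div_g({f}X)\,d\operatorname{vol}_g$. The right-hand side is now the $g$-divergence of an ordinary vector field $fX$, so the classical Divergence Theorem applies.

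Next, I would apply the classical Divergence Theorem for the Levi-Civita connection on $(M,g)$. With $\nu$ chosen as the \emph{inner} unit normal to $\partial M$, the usual sign convention gives
\begin{equation*}
 \int_M \Div_g({f}X)\,d\operatorname{vol}_g = -\int_{\partial M} g({f}X,\nu)\,\omega_{\,\hat g},
\end{equation*}
where $\omega_{\,\hat g}$ is the induced volume form on $\partial M$. Using the skew-symmetry of $f$ with respect to $g$ (which follows from the skew-symmetry of $F$ and \eqref{E-def-f}), namely $g({f}X,\nu)=-g(X,{f}\nu)$, the boundary integral becomes $+\int_{\partial M} g(X,{f}\nu)\,\omega_{\,\hat g}$, which is precisely~\eqref{E-Div-Th-P1}.

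For the boundaryless case, the argument is the same up to the step of applying the Divergence Theorem: for $X$ of compact support, $fX$ also has compact support, so $\int_M \Div_g({f}X)\,d\operatorname{vol}_g=0$ by the standard Stokes/Divergence identity (or directly from \eqref{E-divf-1}), and combining with $\Div_f X=\Div_g(fX)$ yields $\int_M (\Div_f X)\,d\operatorname{vol}_g=0$.

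The only subtle point, which I would verify carefully, is the sign bookkeeping: the convention that $\nu$ is the \emph{inner} normal (as stated in the theorem) is what absorbs the minus sign coming from skew-symmetry of $f$, so that the final identity appears without a sign. All other steps are formal consequences of Lemma~\ref{L-03B} and the classical Divergence Theorem, and no new curvature or torsion computations are required.
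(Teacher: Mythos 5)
Your proposal is correct and follows exactly the route the paper intends: condition \eqref{E-cond-PP-stat} gives $\Div_f X=\Div_g(fX)$ by Lemma~\ref{L-03B}, and the classical Divergence Theorem applied to the field $fX$ (together with skew-symmetry of $f$ and the inner-normal convention, which you check carefully) yields \eqref{E-Div-Th-P1} and its boundaryless special case.
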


The following pointwise inner products and norms for $(0,k)$-tensors will be used:
\[
 g(S_1,\,S_2) = \sum\nolimits_{\,i_1,\ldots,\,i_k} S_1(e_{i_1},\ldots,\,e_{i_k})\,S_2(e_{i_1},\ldots,\,e_{i_k}),\quad
  \|S\| = \sqrt{g(S,\,S)},
\]
while, for $k$-forms, we set
 $g(\omega_1,\,\omega_2) = \sum\nolimits_{\,i_1<\ldots\,<i_k} \omega_1(e_{i_1},\ldots,\,e_{i_k})\,\omega_2(e_{i_1},\ldots,\,e_{i_k})$.

For $L^2$-product of compactly supported tensors on a Riemannian manifold, we set
\[
 (S_1,\,S_2)_{L^2}=\int_M g(S_1,\,S_2)\,
 d\operatorname{vol}_{g}.
\]
The~$L^2$-adjoint operator $\nabla^{*{f}}$
(and similarly $\breve\nabla^{*{f}}$)
for $\nabla^{f}$
maps $(p,k+1)$-tensors, where $p=0,1$, to $(p,k)$-tensors:
\begin{equation}\label{E-nabla-astP}
 (\nabla^{*{f}} S)(X_{1},\ldots,X_{k}) =
 -\sum\nolimits_{\,i} (\nabla^{f}_{e_{i}} S)(e_{i},X_{1},\ldots,X_{k}) .
\end{equation}

\begin{lemma}\label{L-03}
The $\nabla^{*{f}}$ and $\breve\nabla^{*{f}}$ are related for any form $\omega\in\Lambda^k(M)$ by
\begin{align}\label{E-breve-nabla}
 \breve\nabla^{*{f}}\omega = \nabla^{*{f}}\omega
 +\sum\nolimits_{\,i}\iota_{\,e_i}((K_{e_i}+K^*_{e_i})\,\omega);
\end{align}
moreover, if \eqref{E-cond-E2} is true, then
$\breve\nabla^{*{f}}\omega = \nabla^{*{f}}\omega$.
\end{lemma}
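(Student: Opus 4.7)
The plan is to trace through the definitions and use the tensor derivation rule \eqref{E-nablaP-S}, reducing the identity to a direct algebraic comparison that is then contracted by a local orthonormal frame.

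First, I would subtract \eqref{E-brave-nabla} from \eqref{nablaP} to obtain the pointwise identity on vector fields:
\begin{equation*}
 \nabla^{f}_{Y}X - \breve\nabla^{f}_{Y}X = (K_Y + K^*_Y)X,
\end{equation*}
while both $\nabla^{f}_Y$ and $\breve\nabla^{f}_Y$ act as $fY$ on smooth functions. Therefore, when I apply \eqref{E-nablaP-S} to a $k$-form $\omega$ separately for $\nabla^{f}$ and $\breve\nabla^{f}$ and subtract, the scalar-derivative terms cancel and only the arguments differ:
\begin{equation*}
 (\breve\nabla^{f}\omega - \nabla^{f}\omega)(Y, X_1, \ldots, X_k)
 = \sum\nolimits_{\,i}\omega(X_1, \ldots, (K_Y + K^*_Y)X_i, \ldots, X_k).
\end{equation*}
By the definition \eqref{E-stat-L1-0} of the action $K_Y\omega$ (extended to $K_Y+K^*_Y$), the right-hand side equals $-((K_Y + K^*_Y)\omega)(X_1, \ldots, X_k)$.

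Next, I invoke the trace formula \eqref{E-nabla-astP} applied to both $\nabla^{*f}$ and the analogous formula for $\breve\nabla^{*f}$. Setting $Y=e_i$ in the first slot of the previous identity and summing over $i$, I obtain
\begin{equation*}
 \breve\nabla^{*f}\omega - \nabla^{*f}\omega
 = -\sum\nolimits_{\,i}\bigl(\breve\nabla^{f}_{e_i}\omega - \nabla^{f}_{e_i}\omega\bigr)(e_i, \ldots)
 = \sum\nolimits_{\,i}\iota_{\,e_i}\bigl((K_{e_i}+K^*_{e_i})\omega\bigr),
\end{equation*}
which is precisely \eqref{E-breve-nabla}. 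For the final assertion, under hypothesis \eqref{E-cond-E2} the identity \eqref{E-stat-L1-b} forces the correction sum to vanish, yielding $\breve\nabla^{*f}\omega = \nabla^{*f}\omega$.

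The computation is essentially bookkeeping; the main care is in sign-tracking through the derivation rule and in correctly identifying the interior product $\iota_{e_i}$ as the operation of evaluating a $k$-form in its first slot at $e_i$. No truly hard step is involved, since the key algebraic fact $\breve\nabla^{f} - \nabla^{f} = -(K+K^*)$ on vectors is immediate from \eqref{nablaP} and \eqref{E-brave-nabla}, and the vanishing under \eqref{E-cond-E2} is already packaged in \eqref{E-stat-L1-b}.
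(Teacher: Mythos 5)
Your proof is correct and takes essentially the same route as the paper: both express the difference $\breve\nabla^{f}-\nabla^{f}$ through $K+K^{*}$ via \eqref{nablaP}, \eqref{E-brave-nabla} and \eqref{E-nablaP-S}, contract in the first slot over an orthonormal frame using \eqref{E-nabla-astP}, and invoke \eqref{E-stat-L1-b} under \eqref{E-cond-E2} for the final vanishing. Your sign bookkeeping is even slightly cleaner than the paper's intermediate display (which routes both adjoints through $\nabla^{*0}$ with flipped signs), but the resulting identity \eqref{E-breve-nabla} is the same.
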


\begin{proof}
Using \eqref{E-nabla-astP} and \eqref{E-nablaP-S},
we relate $\nabla^{*{f}}$ and $\breve\nabla^{*{f}}$ with $\nabla^{*{0}}$ for any form $\omega\in\Lambda^k(M)$:
\begin{equation*}
 \nabla^{*{f}}\,\omega = \nabla^{*{0}}\,\omega
 +\sum\nolimits_{\,i}\iota_{\,e_i}(K_{e_i}\,\omega),
 \quad
 \breve\nabla^{*{f}}\,\omega = \nabla^{*{0}}\,\omega
 -\sum\nolimits_{\,i}\iota_{\,e_i}(K^*_{e_i}\,\omega).
\end{equation*}
Hence \eqref{E-breve-nabla} is true.
Applying \eqref{E-cond-E2} and \eqref{E-stat-L1-b} to \eqref{E-breve-nabla}, yields
$\breve\nabla^{*{f}}\omega = \nabla^{*{f}}\omega$.
\end{proof}

The $\nabla^{*{f}}$ is related to the ${f}$-\textit{divergence} \eqref{E-Pdiv-1}~by
\begin{equation}\label{E-divPX}
 \Div_{f}X  = -\nabla^{*{f}} X^\flat ,\quad
 X\in{\cal X}_{M}.
\end{equation}

The next result shows when $\nabla^{*{f}}$ is $L^2$-{adjoint to} $\nabla^{f}$ on $k$-forms.


\begin{proposition}
Let an Einstein's connection $\nabla$ satisfy
\eqref{E-cond-E2} and \eqref{E-cond-PP-stat}.
Then for any compactly supported form $\omega_1\in\Lambda^k(M)$
and
a form $\omega_2\in\Lambda^{k+1}(M)$, we~have
\begin{equation}\label{E-cond-PP-intB}
 (
 \nabla^{*{f}} \omega_2,\ \omega_1)_{L^2} = (\omega_2,\ \nabla^{f} \omega_1)_{L^2}.
\end{equation}
\end{proposition}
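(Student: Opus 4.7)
The plan is to mimic the classical proof that $\nabla^{*g}$ is the $L^2$-adjoint of the Levi-Civita connection on forms, by establishing a pointwise divergence identity and then invoking Theorem~\ref{T-P-Stokes}, whose hypothesis is exactly \eqref{E-cond-PP-stat}. Concretely, I would introduce an auxiliary vector field $Z$ on $M$, compactly supported because $\omega_1$ is, by the frame-independent prescription
\[
 g(Z,Y) := \frac{1}{k!}\sum\nolimits_{\,i_1,\ldots,i_k}\omega_{2}(Y,e_{i_1},\ldots,e_{i_k})\,\omega_{1}(e_{i_1},\ldots,e_{i_k}),\quad Y\in{\mathfrak X}_M,
\]
for any local orthonormal frame $\{e_i\}$. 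The goal is then to prove the pointwise identity
\[
 g(\omega_2,\nabla^{f}\omega_1) - g(\nabla^{*{f}}\omega_2,\omega_1) = \Div_{f} Z,
\]
after which integration and Theorem~\ref{T-P-Stokes} (applicable thanks to \eqref{E-cond-PP-stat}) immediately yield \eqref{E-cond-PP-intB}, since the boundary term vanishes by compact support.

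The heart of the argument is the pointwise computation. I would work at a fixed point $p\in M$ in a geodesic $\nabla^g$-orthonormal frame satisfying $\nabla^g_{e_i}e_j|_p=0$, so that $\nabla^{f}_{e_i}e_j|_p = K_{e_i}e_j$. The crucial structural input is that under \eqref{E-cond-E2} the formula \eqref{Eq-proof-K} forces $\nabla^{f}g=0$; equivalently, the Leibniz identity
\[
 (fX)(g(Y,Z)) = g(\nabla^{f}_X Y,Z) + g(Y,\nabla^{f}_X Z)
\]
holds. Applying this Leibniz rule to $g(Z,e_j)$, differentiating along $fe_j$, summing over $j$, and then expanding the defining formula for $g(Z,Y)$ by the product rule produces three types of terms: one involving $\nabla^{f}\omega_1$ via \eqref{E-nablaP-S}, one that recombines into $\nabla^{*f}\omega_2$ via \eqref{E-nabla-astP}, and residual terms coming from the action of $K$ and $K^*$ on the form indices. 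The residual $K+K^{*}$ terms cancel thanks to the skew-symmetry of each $K_{e_i}$ forced by \eqref{E-cond-E2} together with the identity \eqref{E-stat-L1-b} of the preceding lemma, which is precisely the input needed so that $\breve\nabla^{*f}\omega=\nabla^{*f}\omega$ and the duality collapses onto a single operator.

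The main obstacle is carrying out the indexed computation while correctly handling antisymmetry: $\omega_2$ is totally antisymmetric in $k+1$ slots (one distinguished, $k$ contracted against $\omega_1$), so the combinatorial factor $1/k!$ and the passage between ordered and unordered summations have to be tracked carefully. Once the bookkeeping is in place, the cancellation of the $K$-type error terms under \eqref{E-cond-E2} via \eqref{E-stat-L1-b} is essentially forced, and Theorem~\ref{T-P-Stokes} delivers the conclusion.
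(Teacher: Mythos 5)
Your proposal is correct and follows essentially the same route as the paper: the paper also introduces the auxiliary compactly supported $1$-form $\omega(Y)=g(\iota_Y\,\omega_2,\omega_1)$ (your $Z=\omega^\sharp$), establishes pointwise that $\Div_{f}Z=-\nabla^{*f}\omega=g(\omega_2,\nabla^{f}\omega_1)-g(\nabla^{*f}\omega_2,\omega_1)$ with the $K+K^*$ correction terms vanishing under \eqref{E-cond-E2} (via \eqref{E-stat-L1-b}, $E+E^*=0$ and $\breve\nabla^{*f}=\nabla^{*f}$), and then concludes by integrating with Theorem~\ref{T-P-Stokes} under \eqref{E-cond-PP-stat}.
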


\begin{proof}
Define a compactly supported 1-form $\omega$ by
 $\omega(Y) = g(\iota_{\,Y}\,\omega_2,\,\omega_1)$, where $Y\in{\mathcal X}_{M}$.
To~simplify calculations,
assume that $k=1$.
Take an orthonormal frame $(e_{i})$ such that $\nabla_Y e_{i}=0$ for $Y\in T_xM$.
To simplify calculations, assume that $s=t=1$, then 
at~$x\in M$,
\begin{align*}
 & -\nabla^{*{f}}\omega =\sum\nolimits_{j}(\nabla^{f}_{e_j}\omega)(e_j)
 = \sum\nolimits_{i,j} g({f} e_j, e_i)\,(\nabla_{e_i}\omega)(e_j)\\
 & = \sum\nolimits_{i,j,c} g({f} e_j, e_i)\,\nabla_{e_i} g(\omega_2(e_j,e_{c}), \omega_1(e_{c})\,)\\
 & = \sum\nolimits_{i,j,c} g({f} e_j, e_{i})
 \Big[ g(\nabla_{e_i} \omega_2(e_j,e_{c}), \omega_1(e_{c})\,) + g(\omega_2(e_j,e_{c}), \nabla_{e_i} \omega_1(e_{c})\,)\Big] \\
 &= \sum\nolimits_{i,j,c} \Big[ g(\,g({f} e_j, e_i)\nabla_{e_i}\omega_2(e_j,e_{c}), \omega_1(e_{c})) +g(\omega_2(e_j,e_{c}),g({f} e_j, e_i)\nabla_{e_i}\omega_1(e_{c})\,)\Big] \\
 &= \sum\nolimits_{j,c}\Big[
 g(\nabla^{f}_{e_j}\omega_2(e_j,e_{c}), \omega_1(e_{c})\,) +g(\omega_2(e_j,e_{c}), \nabla^{f}_{e_j}\omega_1(e_{c})\,)\Big] \\
 &= -g(\nabla^{*{f}}\omega_2, \omega_1)
 + g(\omega_2, \nabla^{f} \omega_1).
\end{align*}
By the above equality, using \eqref{E-breve-nabla},
we obtain
\begin{equation}\label{E-div-f-omega}
 -\nabla^{*{f}}\omega = -g(\breve\nabla^{*{f}} \omega_2,\ \omega_1) + g(\omega_2,\ \nabla^{f} \omega_1)
 -g(\iota_{E+E^*}\,\omega_2,\,\omega_1).
\end{equation}
By \eqref{E-cond-E2}, $E+E^*=0$ and $\breve\nabla^{*{f}}=\nabla^{*{f}}$ hold.
Thus, by \eqref{E-divPX} and Theorem~\ref{T-P-Stokes} with $X^\flat=\omega$, we get~\eqref{E-cond-PP-intB}.
\end{proof}


The
operator
$\nabla^{*{f}}\,\nabla^{f}$ will be called the \textit{Bochner ${f}$-Laplacian} for an ${f}$-connection.


\begin{proposition}\label{P-max}
Let an Einstein's connection $\nabla$ on a closed manifold $(M,G=g+F)$ satisfy \eqref{E-cond-E2} and \eqref{E-cond-PP-stat}.
If~$g(
\nabla^{*f}\,\nabla^{f}\omega,\,\omega)\le0$ holds for a form $\omega\in\Lambda^k(M)$, then
$\nabla^{f} \omega=0$.
\end{proposition}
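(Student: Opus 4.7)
The plan is to run the standard Bochner-type vanishing argument: integrate the pointwise inequality over the closed manifold $M$, convert the left-hand side into a squared $L^{2}$-norm by applying the $L^{2}$-adjointness just established in \eqref{E-cond-PP-intB}, and conclude that this nonnegative quantity must vanish.

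Concretely, I would first integrate the hypothesis $g(\nabla^{*f}\nabla^{f}\omega,\omega)\le 0$ against $d\operatorname{vol}_{g}$ to get
\[
 (\nabla^{*f}\nabla^{f}\omega,\omega)_{L^{2}}\le 0.
\]
Next, I would apply the adjoint identity \eqref{E-cond-PP-intB} with $\omega_{1}=\omega$ and $\omega_{2}=\nabla^{f}\omega$ to obtain
\[
 (\nabla^{*f}\nabla^{f}\omega,\omega)_{L^{2}}=(\nabla^{f}\omega,\nabla^{f}\omega)_{L^{2}}=\|\nabla^{f}\omega\|_{L^{2}}^{2}\ge 0.
\]
Combining these two displays forces $\|\nabla^{f}\omega\|_{L^{2}}^{2}=0$. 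Since the integrand $\|\nabla^{f}\omega\|^{2}$ is nonnegative and continuous, this gives $\nabla^{f}\omega\equiv 0$ on $M$.

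The main technical obstacle is that \eqref{E-cond-PP-intB} was stated for a genuine form $\omega_{2}\in\Lambda^{k+1}(M)$, whereas $\nabla^{f}\omega$ is \emph{a priori} only a $(0,k+1)$-tensor, since the operator $\nabla^{f}$ need not preserve skew-symmetry. I expect this to be a non-issue upon inspecting the proof of \eqref{E-cond-PP-intB}: the auxiliary $1$-form $\eta(Y)=g(\iota_{\,Y}\,\omega_{2},\omega_{1})$ and the calculation leading to \eqref{E-div-f-omega} use only the tensorial structure of $\omega_{2}$ and not its antisymmetry. The hypotheses \eqref{E-cond-E2} and \eqref{E-cond-PP-stat} are exactly what ensure $E+E^{*}=0$ and $\breve\nabla^{*f}=\nabla^{*f}$, after which Theorem~\ref{T-P-Stokes} annihilates the boundary contribution on the closed manifold $M$. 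With this mild extension of \eqref{E-cond-PP-intB} in hand, the three-line argument above completes the proof.
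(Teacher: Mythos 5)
Your proposal is correct and is essentially the paper's own proof: the paper likewise applies \eqref{E-cond-PP-intB} with $\omega_{1}=\omega$ and $\omega_{2}=\nabla^{f}\omega$ to obtain $0\ge(\nabla^{*f}\nabla^{f}\omega,\omega)_{L^2}=(\nabla^{f}\omega,\nabla^{f}\omega)_{L^2}\ge 0$, hence $\nabla^{f}\omega=0$. Your additional remark that $\nabla^{f}\omega$ is a priori only a $(0,k+1)$-tensor, so \eqref{E-cond-PP-intB} must be inspected to see it extends beyond skew-symmetric arguments, is a sensible point the paper passes over silently, but it does not change the approach.
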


\begin{proof}
We apply \eqref{E-cond-PP-intB},
\[
 0\ge(
 \nabla^{*{f}}\,\nabla^{f}\omega,\,\omega)_{L^2}=(\nabla^{f}\omega,\,\nabla^{f}\omega)_{L^2}\ge 0;
\]
hence, $\omega$ is ${f}$-parallel: $\nabla^{f} \omega=0$.
\end{proof}

\subsection{The Hodge ${f}$-Laplacians}
\label{sec:laplace}

Using an Einstein's connection, define the \textit{exterior ${f}$-derivative}
of
$\omega\in\Lambda^{k}(M)$
by
\begin{equation}\label{E-dP}
 d^{f}\omega(X_{0},\ldots,X_{k})
 =\sum\nolimits_{\,i}(-1)^{i}(\nabla^{f}\omega)(X_{i}, X_{0},\ldots,X_{i-1},X_{i+1},\ldots X_{k}),
\end{equation}
and similarly for $\breve d^{\,{f}}$.
For $\psi\in C^\infty(M)$ and $X\in\mathfrak{X}_M$, we have $d^{f}\psi(X)=\nabla^{f}_X\psi$ and
$\breve d^{\,{f}}\psi(X)=\breve\nabla^{f}_X\psi$.

\begin{proposition}
The ${f}$-derivative $d^{f}:{\mathrm\Omega}^{k}(M)\rightarrow{\mathrm\Omega}^{k+1}(M)$ is a 1-degree derivation, see \eqref{E-1deg-der-rho}:
\begin{align}\label{E-1deg-der}
\nonumber
 & d^{f}\omega(X_{0},\ldots,X_{k}) =\sum\nolimits_{\,i=0}^{k}(-1)^{i}{f}X_{i}(\omega(X_{0},\ldots, X_{i-1},X_{i+1},\ldots,X_{k}))\\
 & +\sum\limits_{\,0\le i<j\le k}(-1)^{i+j}\omega
 \big([X_i, X_j]_{f}, X_{0},\ldots, X_{i-1},X_{i+1},\ldots,
 X_{j-1},X_{j+1},\ldots,X_{k}\big).
\end{align}
Similarly to \eqref{E-1deg-der}, for any $\omega\in\Lambda^{k}(M)$, we~get
\begin{align}\label{E-1deg-derB}
\notag
 & {\breve d}^{f}\omega(X_0,\ldots,X_k) =\sum\nolimits_{\,i=0}^{k}(-1)^{i} {f} X_{i}(\omega(X_{0},\ldots,
 X_{i-1},X_{i+1},\ldots,X_{k}))\\
 & +\sum\limits_{\,0\le i<j\le k}(-1)^{i+j}\omega
 \big([X_i, X_j]^\smile_{f}, X_0,\ldots, X_{i-1},X_{i+1},\ldots,
 X_{j-1},X_{j+1},\ldots,X_k\big).
\end{align}
Moreover, if condition \eqref{E-cond-E2} is valid, then
${\breve d}^{f}\omega = d^{f}\omega$.
\end{proposition}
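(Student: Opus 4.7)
The plan is to derive \eqref{E-1deg-der} directly from the definition \eqref{E-dP} by substituting the expansion \eqref{E-nablaP-S} of $\nabla^f\omega$ and reorganizing the resulting double sum by the standard Cartan-type argument adapted to the $f$-connection. First, I would apply \eqref{E-nablaP-S} inside \eqref{E-dP}: for each $i$,
\[
 (\nabla^f\omega)(X_i, X_0,\ldots,X_{i-1},X_{i+1},\ldots,X_k)
 = \nabla^f_{X_i}\bigl(\omega(X_0,\ldots,X_{i-1},X_{i+1},\ldots,X_k)\bigr)
 - \sum_{j\neq i}\omega(\ldots,\nabla^f_{X_i}X_j,\ldots).
\]
Since $\nabla^f_{X_i}\psi=(fX_i)(\psi)$ on scalars (by the defining relation set in Section~\ref{sec:nabla}), the ``scalar derivative'' part contributes exactly the first sum in \eqref{E-1deg-der}.

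Next, I would collect the remaining double sum $-\sum_i(-1)^i\sum_{j\neq i}\omega(\ldots,\nabla^f_{X_i}X_j,\ldots)$ pair by pair: for each $i<j$, add the $(i,j)$ contribution (in which $\nabla^f_{X_i}X_j$ sits at slot $j-1$ of the remaining list) and the $(j,i)$ contribution (in which $\nabla^f_{X_j}X_i$ sits at slot $i$). Using the skew-symmetry of $\omega$ to pull these arguments into the leading slot produces additional sign factors $(-1)^{j-1}$ and $(-1)^i$, respectively. A short sign count combines everything into
\[
 \sum_{0\le i<j\le k}(-1)^{i+j}\omega\bigl(\nabla^f_{X_i}X_j-\nabla^f_{X_j}X_i,\,X_0,\ldots,X_{i-1},X_{i+1},\ldots,X_{j-1},X_{j+1},\ldots,X_k\bigr),
\]
which, by the definition \eqref{E-Pbracket2} of the $f$-bracket, is precisely the second sum in \eqref{E-1deg-der}. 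This establishes \eqref{E-1deg-der}.

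The formula \eqref{E-1deg-derB} is obtained by running the same computation verbatim with $\nabla^f$ replaced by $\breve\nabla^f$ and $[\cdot,\cdot]_f$ by $[\cdot,\cdot]^\smile_f$; the only inputs used are that $\breve\nabla^f$ satisfies the Koszul conditions and acts on scalars as $(fX)$, both clear from \eqref{E-brave-nabla}. Finally, if \eqref{E-cond-E2} holds, \eqref{E-bracket-b} yields $[X,Y]^\smile_f=[X,Y]_f$, so the right-hand sides of \eqref{E-1deg-der} and \eqref{E-1deg-derB} coincide term by term, giving $\breve d^{f}\omega=d^{f}\omega$.

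The main obstacle is the bookkeeping of signs in the reorganization step: one has to be careful about the position of $\nabla^f_{X_i}X_j$ within the punctured tuple $(X_0,\ldots,\widehat{X_i},\ldots,X_k)$ and the parity of transpositions required to bring it to the first slot. Once this is handled cleanly for the pair $(i,j)$ with $i<j$ and mirrored for $(j,i)$, the terms fuse into the antisymmetric combination $\nabla^f_{X_i}X_j-\nabla^f_{X_j}X_i=[X_i,X_j]_f$, and the argument closes without further input beyond the Koszul-type rules already verified for $\nabla^f$.
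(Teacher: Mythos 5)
Your proposal is correct and follows essentially the same route as the paper: expand $\nabla^{f}\omega$ via \eqref{E-nablaP-S} inside \eqref{E-dP}, note that $\nabla^{f}$ acts on scalars through $fX_i$, regroup the double sum pairwise (using skew-symmetry of $\omega$ for the sign bookkeeping) so that the terms fuse into $[X_i,X_j]_{f}$ by \eqref{E-Pbracket2}, then repeat verbatim for $\breve\nabla^{f}$ and conclude the final identity from \eqref{E-bracket-b}. No gaps; this matches the paper's argument.
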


\begin{proof}
For any form $\omega\in\Lambda^{k}(M)$, using \eqref{E-dP},
we obtain
\begin{align*}
 & d^{f}\omega(X_{0},\ldots,X_{k})
 =\sum\nolimits_{\,i=0}^{k}(-1)^{i}{f}X_{i}(\omega(X_{0},\ldots,X_{i-1},X_{i+1},\ldots,X_{k}))\\
 & +\sum\nolimits_{\,i=0}^{k}(-1)^{i}\Big(\sum\nolimits_{\,j=0}^{i-1}
\omega(X_{0},\ldots,\nabla^{f}_{X_{i}}X_{j},\ldots,X_{i-1},X_{i+1},\ldots,X_{k})\\
 & +\sum\nolimits_{\,j=i+1}^{k}\omega(X_{0},\ldots,X_{i-1},X_{i+1},\ldots,\nabla^{f}_{X_{i}}X_{j},\ldots,X_{k})\Big)\\
 & =\sum\nolimits_{\,i=0}^{k}(-1)^{i}{f}X_{i}(\omega(X_{0},\ldots,X_{i-1},X_{i+1},\ldots,X_{k}))\\
 & +\sum\limits_{\,0\le i<j\leq k}(-1)^{i+j}\omega\big(\nabla^{f}_{X_{i}}X_{j}
 -\nabla^{f}_{X_{j}}X_{i},X_{0},\ldots,X_{i-1},X_{i+1},\ldots,X_{j-1},X_{j+1},\ldots,X_{k}\big).
\end{align*}
Applying \eqref{E-Pbracket2} to the above, we complete the proof
of \eqref{E-1deg-der}.
The proof of \eqref{E-1deg-derB} is similar.
The~last statement follows from the above and the first equality of \eqref{E-bracket-b}.
\end{proof}

Define the
${f}$-\textit{codifferential} $\delta^{f}:\Lambda^{k}(M) \to\Lambda^{k-1}(M)$ by
 $\delta^{\,f}:=\nabla^{*{f}}$.
Similarly, we set $\breve\delta^{\,f}:=\breve\nabla^{*{f}}$.
Using the analog of \eqref{E-nabla-astP}, we obtain
\[
 \breve\delta^{\,{f}}\omega(X_{2},\ldots,X_{k})
  =-\sum\nolimits_{\,i}(\breve\nabla^{f}_{e_{i}}\,\omega)(e_{i},X_{2},\ldots,X_{k}).
\]

\begin{proposition}\label{P-dP-deltaPB}
On a closed manifold $(M,G=g+F)$ with an Einstein's connection satis\-fying
\eqref{E-cond-E2} and \eqref{E-cond-PP-stat},
for any
$\omega_{1}\in\Lambda^{k}(M)$
and $\omega_{2}\in\Lambda^{k+1}(M)$, we~have
 $(\breve\delta^{\,{f}}\omega_{2},\,\omega_{1})_{L^2} =(\omega_{2},\,d^{f}\omega_{1})_{L^2}$.
\end{proposition}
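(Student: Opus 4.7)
The plan is to combine three ingredients that are already developed in the excerpt. First, under the standing hypothesis \eqref{E-cond-E2}, Lemma~\ref{L-03} supplies the identity $\breve\nabla^{*f}\omega_{2} = \nabla^{*f}\omega_{2}$, and since $\breve\delta^{\,f}:=\breve\nabla^{*f}$, this reduces the left-hand side to
\[
(\breve\delta^{\,f}\omega_{2},\,\omega_{1})_{L^{2}} = (\nabla^{*f}\omega_{2},\,\omega_{1})_{L^{2}}.
\]
Second, I would invoke the previous proposition \eqref{E-cond-PP-intB} (which requires precisely \eqref{E-cond-E2} and \eqref{E-cond-PP-stat}, both assumed here) to rewrite this as
\[
(\nabla^{*f}\omega_{2},\,\omega_{1})_{L^{2}} = (\omega_{2},\,\nabla^{f}\omega_{1})_{L^{2}}.
\]

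It remains to convert $\nabla^{f}\omega_{1}$ into $d^{f}\omega_{1}$ under the pairing with $\omega_{2}$. The key observation is that $\nabla^{f}\omega_{1}$ is already skew-symmetric in its last $k$ entries because $\omega_{1}$ is a $k$-form, while $d^{f}\omega_{1}$, by the defining formula \eqref{E-dP}, is exactly the full skew-symmetrization of $\nabla^{f}\omega_{1}$ obtained by cycling the first argument through all $k+1$ positions with alternating signs. Since $\omega_{2}\in\Lambda^{k+1}(M)$ is itself alternating in all its arguments, the pointwise pairing $g(\omega_{2},\nabla^{f}\omega_{1})$ depends only on the alternating part of $\nabla^{f}\omega_{1}$. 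Expanding both sides in a local orthonormal frame $\{e_{i}\}$ and using the skew-symmetry of $\omega_{2}$ to group terms $(-1)^{i}\omega_{2}(e_{i_{0}},\ldots,e_{i_{k}})\,(\nabla^{f}\omega_{1})(e_{i_{i}},e_{i_{0}},\ldots,\widehat{e_{i_{i}}},\ldots,e_{i_{k}})$ yields $g(\omega_{2},\nabla^{f}\omega_{1}) = g(\omega_{2},d^{f}\omega_{1})$ pointwise; integrating over $M$ gives the desired $L^{2}$ equality.

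The main technical obstacle is the last conversion step: matching combinatorial factors between the two different pointwise inner-product conventions stated in Section~\ref{sec:div}—the full-index sum used for general $(0,k)$-tensors and the ordered-index sum $\sum_{i_{1}<\cdots<i_{k}}$ used for $k$-forms. Once this normalization is pinned down (either by working throughout with the tensor pairing, as is implicitly done in the proof of \eqref{E-cond-PP-intB}, or by absorbing the factor $k+1$ arising from the $k+1$ ways to place the differentiation index), the equality $g(\omega_{2},\nabla^{f}\omega_{1}) = g(\omega_{2},d^{f}\omega_{1})$ is a routine skew-symmetrization identity and completes the proof of $(\breve\delta^{\,f}\omega_{2},\omega_{1})_{L^{2}} = (\omega_{2},d^{f}\omega_{1})_{L^{2}}$.
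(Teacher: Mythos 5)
Your argument is correct, and it reaches the conclusion by a somewhat different (and shorter) route than the paper. The paper re-runs the divergence argument from scratch: it first proves the pointwise identity $g(d^{f}\omega_{1},\omega_{2})=g(\nabla^{f}\omega_{1},\omega_{2})$ (the "$(k+1)$ factor absorbed in the definition of $d^{f}$"), then introduces the auxiliary $1$-form $\omega(Y)=g(\iota_{Y}\,\omega_{2},\omega_{1})$, expresses $\Div_{f}(\omega^{\flat})$ as $-g(\breve\delta^{\,f}\omega_{2},\omega_{1})+g(\omega_{2},d^{f}\omega_{1})$ plus a $K$-correction killed by \eqref{E-stat-L1-b} under \eqref{E-cond-E2}, and finally integrates via Theorem~\ref{T-P-Stokes}, which is where \eqref{E-cond-PP-stat} enters. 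You instead piggyback on results already in place: Lemma~\ref{L-03} turns $\breve\delta^{\,f}\omega_{2}$ into $\nabla^{*f}\omega_{2}$, the adjunction \eqref{E-cond-PP-intB} (whose hypotheses \eqref{E-cond-E2} and \eqref{E-cond-PP-stat} are exactly those assumed here, and whose compact-support requirement is automatic on a closed manifold) gives the integration by parts, and the skew-symmetrization identity converts $\nabla^{f}\omega_{1}$ into $d^{f}\omega_{1}$ under the pairing with the alternating $\omega_{2}$. The two proofs share the same core ingredients — the $f$-divergence theorem and the vanishing of the $\sum_{i}\iota_{e_i}((K_{e_i}+K^{*}_{e_i})\,\cdot)$ term — you just consume them indirectly through the earlier proposition rather than repeating the Stokes computation, which is a legitimate economy. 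Your flagged normalization issue resolves cleanly: interpreting \eqref{E-cond-PP-intB} with the full tensor pairing (unavoidable, since $\nabla^{f}\omega_{1}$ is not alternating) and the statement of the present proposition with the ordered-index form pairing, the left side picks up a factor $1/k!$ and the right side picks up $(k+1)/(k+1)! = 1/k!$, so the two sides match exactly; this is the same bookkeeping the paper performs implicitly when it "absorbs" the $(k+1)$ factor.
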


\begin{proof} We derive as in the classical case:
\begin{align*}
 & g(d^{f}\omega_{1},\,\omega_{2}) = \\
 & = \sum\nolimits_{\,u=0}^{k}(-1)^{i}\,
 \nabla^{f}_{\partial_{i_{u}}}\omega_{1}(\partial_{i_{0}},\ldots,
 \partial_{i_{u-1}}, \partial_{i_{u+1}},
 \ldots,\partial_{i_{k}}) g^{i_{0}j_{0}}\ldots g^{i_{k}j_{k}} \omega_{2}(\partial_{j_{0}},\ldots,\partial_{j_{k}}) \\
 & = (k+1)\big(\nabla^{f}_{\partial_{i_{0}}}\omega_{1}(\partial_{i_{1}},\ldots,\partial_{i_{k}})\big)
 g^{i_{0}j_{0}}\ldots g^{i_{k}j_{k}}\omega_{2}(\partial_{j_{0}},\ldots,\partial_{j_{k}})
 =g(\nabla^{f}\omega_{1},\,\omega_{2}) .
\end{align*}
It appears a $(k + 1)$ factor, that finally is absorbed in the definition of~$d^{f}$.
Similarly,
we get
$g(\breve d^{f}\omega_{2},\,\omega_{1}) =g(\breve\nabla^{*{f}}\omega_{2},\,\omega_{1})$.
By the above, we have for the vector field $X=\omega^\flat$ (dual to the 1-form $\omega(Y)=g(\iota_{\,Y}\,\omega_2,\,\omega_1)$),
an analog of \eqref{E-div-f-omega}:
\begin{equation*}
 \Div_f X =
 -g(\breve d^{f} \omega_2,\,\omega_1)
 + g(\omega_2,\,d^{f}\omega_1)
 +\sum\nolimits_{\,i}\iota_{\,e_i}((K_{e_i}+K^*_{e_i})\,\omega).
 \end{equation*}
Using this,
\eqref{E-stat-L1-b} and Theorem~\ref{T-P-Stokes},
completes the proof.
\end{proof}


\begin{definition}\rm
Define the \textit{Hodge ${f}$-Laplacians} ${\mathrm\Delta}_H^{\,{f}}$ and $\breve{\mathrm\Delta}^{f}_H$ for differential forms by
\begin{eqnarray*}
 {\mathrm\Delta}_H^{\,{f}} = d^{f}\breve\delta^{\,{f}} +\breve\delta^{\,{f}} d^{f},\quad
 \breve{\mathrm\Delta}^{f}_H = d^{f}\delta^{f} + \delta^{f} d^{f}.
\end{eqnarray*}
A
form $\omega$ is called
${f}$-\textit{harmonic} if
${\mathrm\Delta}_H^{\,{f}}\,\omega=0$ and
$\|\omega\|_{L^2}<\infty$ (similarly for $\breve{\mathrm\Delta}^{f}_H$).
\end{definition}

The ${f}$-harmonic forms have similar properties as in the classical case, e.g., \cite{Peter}.
Let \eqref{E-divf-4B} and \eqref{E-cond-E2}
hold on a closed manifold $(M,G=g+F)$,
then for $\omega\in\Lambda^k(M)$, by Proposition~\ref{P-dP-deltaPB},
we~have
 $({\mathrm\Delta}^{f}_H\,\omega,\,\omega)_{L^2} =
 ( d^{f}\omega,\,d^{f}\omega)_{L^2}
 + (\breve\delta^{\,{f}}\omega,\,\breve\delta^{\,{f}}\omega)_{L^2}$;
such $\omega$ is ${f}$-harmonic (and similarly for $\breve{\mathrm\Delta}^{f}_H$-harmonic case) if and only if
$d^{f}\omega=0$ and $\breve\delta^{\,{f}}\omega=0$.
If~${\mathrm\Delta}^{f}_H\,\omega=0$ and
$\omega=d^{f}\,\theta$ hold
for $\omega\in\Lambda^k(M)$, then
$\breve\delta^{\,{f}} d^{f}\theta=\breve\delta^{\,{f}}\omega=0$; thus,
\[
 (\omega,\,\omega)_{L^2}
 =(d^{f}\theta,\, d^{f}\theta)_{L^2}
 =(\theta,\,\breve\delta^{\,{f}} d^{f}\theta)_{L^2}
 = (\theta,\,\breve\delta^{\,{f}}\omega)_{L^2} =0;
\]
and if $\omega\in\Lambda^k(M)$
is ${f}$-harmonic and $\omega=d^{f}\,\theta$ for some $\theta\in\Lambda^{k-1}(M)$, then $\omega=0$.


We also consider the Hodge type Laplacian related to $\nabla^{0}$, defined by
 $\Delta^{0}_H = \delta^{\,0}\,d^{\,0}+d^{\,0}\,\delta^{\,0}$,
where for any differential form $\omega\in\Lambda^{k}(M)$,
\begin{align*}
 & d^{\,0}\omega(X_{0},\ldots,X_{k})=\sum\nolimits_{\,i}(-1)^{i}(\nabla^g\omega)(fX_{i}, X_{0},\ldots,X_{i-1},X_{i+1},\ldots X_{k}),\\
 & \delta^{\,0}\omega(X_{2},\ldots,X_{k}) =-\sum\nolimits_{\,i}({\nabla}^g\omega)(fe_{i}, e_{i},X_{2},\ldots,X_{k}).
\end{align*}

\begin{lemma} For an Einstein's connection $\nabla$
and any form $\omega\in\Lambda^{k}(M)$ we have
\begin{align}\label{E-58-59A}
\delta^{\,{0}}\omega =\delta^{f}\omega
 +\sum\nolimits_{\,i}\iota_{\,e_i}(K_{e_i}\omega)
 =\breve\delta^{f}\omega
 -\sum\nolimits_{\,i}\iota_{\,e_i}(K^*_{e_i}\omega).
\end{align}
Moreover, if condition \eqref{E-cond-E2} is valid, then
\begin{align}\label{E-58-59}
\breve\delta^{f}\omega = \delta^{f}\omega, \quad
\breve{\mathrm\Delta}^{f}_H\,\omega={\mathrm\Delta}^{f}_H\,\omega.
\end{align}
\end{lemma}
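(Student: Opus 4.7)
The plan is to identify $\delta^{0}$ with the $L^{2}$-adjoint $\nabla^{*{0}}$ defined via the analog of \eqref{E-nabla-astP} with $\nabla^{0}$ in place of $\nabla^{f}$, and then to apply the splittings $\nabla^{f}=\nabla^{0}+K$ and $\breve\nabla^{f}=\nabla^{0}-K^{*}$ directly to that adjoint formula.

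First, I would verify $\delta^{0}\omega=\nabla^{*{0}}\omega$ for $\omega\in\Lambda^{k}(M)$. Since $\nabla^{0}_{Y}X=\nabla^{g}_{fY}X$ on vector fields and $\nabla^{0}_{Y}\psi=(fY)\psi$ on functions, unfolding \eqref{E-nablaP-S} yields $(\nabla^{0}\omega)(Y,X_{1},\ldots,X_{k})=(\nabla^{g}\omega)(fY,X_{1},\ldots,X_{k})$, because every correction term comes uniformly from $\nabla^{g}_{fY}$. Substituting into the adjoint formula with $Y=e_{i}$ gives $\nabla^{*{0}}\omega(X_{2},\ldots,X_{k})=-\sum_{i}(\nabla^{g}\omega)(fe_{i},e_{i},X_{2},\ldots,X_{k})$, which is precisely $\delta^{0}\omega$ as defined just above the lemma.

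Next, using the splitting $\nabla^{f}_{Y}=\nabla^{0}_{Y}+K_{Y}$ together with the action \eqref{E-stat-L1-0} of $K_{Y}$ on forms, a direct expansion of \eqref{E-nablaP-S} yields $(\nabla^{f}_{e_{i}}\omega)(e_{i},\cdot)=(\nabla^{0}_{e_{i}}\omega)(e_{i},\cdot)+(K_{e_{i}}\omega)(e_{i},\cdot)$. Summing over $i$ with a minus sign and recognising $\iota_{e_{i}}(K_{e_{i}}\omega)=(K_{e_{i}}\omega)(e_{i},\cdot)$ gives $\delta^{f}\omega=\delta^{0}\omega-\sum_{i}\iota_{e_{i}}(K_{e_{i}}\omega)$, which rearranges to the first equality of \eqref{E-58-59A}. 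The second equality follows by exactly the same computation applied to $\breve\nabla^{f}=\nabla^{0}-K^{*}$ from \eqref{E-brave-nabla}, producing $\breve\delta^{f}\omega=\delta^{0}\omega+\sum_{i}\iota_{e_{i}}(K^{*}_{e_{i}}\omega)$.

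For \eqref{E-58-59}, I would subtract the two expressions in \eqref{E-58-59A} (both equal to $\delta^{0}\omega$) to obtain $(\delta^{f}-\breve\delta^{f})\omega=-\sum_{i}\iota_{e_{i}}((K_{e_{i}}+K^{*}_{e_{i}})\omega)$. Under \eqref{E-cond-E2} the right-hand side vanishes by \eqref{E-stat-L1-b}, so $\breve\delta^{f}\omega=\delta^{f}\omega$. Combined with $\breve d^{f}\omega=d^{f}\omega$ already established in the preceding proposition under the same hypothesis, the definitions of the two Hodge $f$-Laplacians deliver $\breve{\mathrm\Delta}^{f}_{H}\omega={\mathrm\Delta}_{H}^{\,{f}}\omega$ immediately. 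The main obstacle is pure notational bookkeeping: one must track the sign convention in \eqref{E-stat-L1-0} carefully so that the $\pm$ signs in \eqref{E-58-59A} come out as stated, but no new analytic input beyond identities already at hand is required.
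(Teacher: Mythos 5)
Your proposal is correct and takes essentially the same route as the paper: expand $\nabla^{f}=\nabla^{0}+K$ and $\breve\nabla^{f}=\nabla^{0}-K^{*}$ inside the adjoint formula \eqref{E-nabla-astP}, using \eqref{E-nablaP-S} and \eqref{E-stat-L1-0}, to obtain \eqref{E-58-59A}, and then invoke \eqref{E-stat-L1-b} under \eqref{E-cond-E2} to get \eqref{E-58-59}. One small remark: the appeal to $\breve d^{\,f}\omega=d^{f}\omega$ at the end is superfluous, since both Hodge $f$-Laplacians are defined using $d^{f}$, so $\delta^{f}\omega=\breve\delta^{\,f}\omega$ alone already gives $\breve{\mathrm\Delta}^{f}_H\,\omega={\mathrm\Delta}^{\,f}_H\,\omega$.
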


\begin{proof}

Using \eqref{E-nablaP-S}, \eqref{E-stat-L1-0} and \eqref{E-nabla-astP}, we obtain the first equality in \eqref{E-58-59A}:
\begin{equation*}
\delta^{f}\omega =
-\sum\nolimits_{\,i}\nabla^{f}_{e_{i}}\iota_{e_{i}} \omega
=\delta^{\,{0}}\omega
-\sum\nolimits_{\,i}\iota_{e_{i}}(K_{e_{i}}\omega).
\end{equation*}
Similarly, we get the second equality in \eqref{E-58-59A}.
This and \eqref{E-stat-L1-b}
yield the equalities \eqref{E-58-59}.
\end{proof}

We will modify Stokes theorem on a complete open Riemannian manifold.

\begin{proposition}\label{L-Div-1}
Let a complete noncompact manifold $(M,G=g+F)$ be endowed with Einstein's connection $\nabla$ and
a vector field $X$ such that $\Div_{f}X\ge0$ or $\Div_{f}X\le0$, and let conditions \eqref{E-cond-PP-stat} and $\|{f}X\|_{g}\in\mathrm{L}^{1}(M,g)$ hold.
Then $\Div_{f}X\equiv0$.
\end{proposition}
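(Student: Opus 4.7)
The plan is to reduce the statement to the classical $L^1$-Stokes theorem of Yau by converting the $f$-divergence into the Riemannian divergence of the transported vector field $fX$, and then to run the standard cutoff argument on a complete manifold.

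First, I would invoke Lemma~\ref{L-03B}: condition \eqref{E-cond-PP-stat} is exactly what is needed to rewrite
\[
 \Div_{f} X = \Div_{g}(fX).
\]
Setting $Y := fX$, the hypotheses translate into: $Y \in \mathfrak{X}_M$ with $\|Y\|_g \in L^1(M,g)$ and $\Div_g Y$ of constant sign on the complete Riemannian manifold $(M,g)$. Thus the problem collapses to the well-known statement that such $Y$ must be $g$-divergence-free.

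Second, to prove this reduced claim, I would use a standard cutoff argument. Fix $x_0 \in M$ and, for each $R>0$, choose a smooth cutoff $\varphi_R: M \to [0,1]$ with $\varphi_R \equiv 1$ on the geodesic ball $B_R(x_0)$, $\operatorname{supp}\varphi_R \subset B_{2R}(x_0)$, and $|\nabla^g \varphi_R|_g \le C/R$ for a constant $C$ independent of $R$ (this is the Calabi-type construction valid on any complete Riemannian manifold). Since $\varphi_R Y$ has compact support, the classical Divergence Theorem gives
\[
 \int_M \Div_g(\varphi_R Y)\, d\operatorname{vol}_g = 0,
\]
which, upon expanding $\Div_g(\varphi_R Y) = \varphi_R \Div_g Y + g(\nabla^g\varphi_R, Y)$, yields
\[
 \int_M \varphi_R \Div_g Y\, d\operatorname{vol}_g = -\int_M g(\nabla^g\varphi_R, Y)\, d\operatorname{vol}_g.
\]
The right-hand side is bounded in absolute value by $(C/R)\int_M \|Y\|_g\, d\operatorname{vol}_g \to 0$ as $R\to\infty$, using $\|Y\|_g \in L^1$.

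Third, I would pass to the limit on the left-hand side. Assuming without loss of generality $\Div_g Y \ge 0$, the integrand $\varphi_R \Div_g Y$ is nonnegative and increases pointwise to $\Div_g Y$, so monotone convergence gives $\int_M \Div_g Y\, d\operatorname{vol}_g = 0$. Together with $\Div_g Y \ge 0$, this forces $\Div_g Y \equiv 0$ (the case $\Div_g Y \le 0$ is symmetric). Going back through Lemma~\ref{L-03B}, we conclude $\Div_f X \equiv 0$.

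The main obstacle here is really only the invocation of the cutoff functions with $C/R$-bounded gradient on a complete manifold; once that is granted, the rest is a direct $L^1$-Stokes argument. No delicate geometric input beyond completeness and \eqref{E-cond-PP-stat} is required, precisely because the $f$-divergence has already been identified with an ordinary Riemannian divergence.
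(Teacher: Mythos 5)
Your proof is correct, and its first half coincides with the paper's: both arguments use Lemma~\ref{L-03B} (i.e.\ condition \eqref{E-cond-PP-stat}) to identify $\Div_{f}X$ with $\Div_g(fX)$ and then exploit $\|fX\|_g\in L^1(M,g)$ together with the sign condition. The only divergence is in the final analytic step: the paper packages $fX$ into the $(n-1)$-form $\omega=\iota_{fX}\,d\operatorname{vol}_g$, notes $\|\omega\|_g=\|fX\|_g\in L^1$, and invokes (without proof) the Gaffney--Yau exhaustion statement that there exist domains $B_i\nearrow M$ with $\int_{B_i}d\omega\to0$; you instead prove exactly this $L^1$-Stokes step from scratch with cutoff functions $\varphi_R$ satisfying $|\nabla^g\varphi_R|\le C/R$, which makes your version more self-contained (at the cost of needing the standard smooth or Lipschitz cutoff construction on a complete manifold). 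One cosmetic point: $\varphi_R\,\Div_g Y$ need not increase pointwise in $R$ for a generic cutoff family, so replace the monotone-convergence phrase by Fatou's lemma, or by the observation that $\int_{B_R}\Div_g Y\le\int_M\varphi_R\,\Div_g Y\to0$ while the left-hand side increases to $\int_M\Div_g Y$; the conclusion $\Div_g(fX)\equiv0$, hence $\Div_f X\equiv0$, is unaffected.
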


\begin{proof}
Let $\omega$ be the $(n-1)$-form in $M$ given by $\omega= \iota_{{f}X}\,d\operatorname{vol}_{g}$, i.e., the contraction
of the volume form $d\operatorname{vol}_{g}$ in the direction of ${f}X$.
If $\{e_{1}, \ldots, e_{n}\}$ is an orthonormal frame on an open set $U\subset M$ with coframe ${\omega_{1}, \ldots, \omega_{n}}$, then
\[
 \iota_{{f}X}\,d\operatorname{vol}_{g} = \sum\nolimits^{n}_{i=1} (-1)^{i-1}
 g({f}X, e_{i})\,\omega_{1}\wedge\ldots\wedge\omega_{i-1}\wedge\omega_{i+1}\wedge\ldots\wedge\omega_{n}.
\]
Since the $(n - 1)$-forms $\omega_{1}\wedge\ldots\wedge\omega_{i-1}\wedge\omega_{i+1}\wedge\ldots\wedge\omega_{n}$
are orthonormal in ${\mathrm\Omega}^{n-1}(M)$, we get $\|\omega\|_{g}^{2} = \sum\nolimits^{n}_{i=1} g({f}X, e_{i})^{2} =\|{f}X\|_{g}^{2}$.
Thus, $\|\omega\|_{g}\in\mathrm{L}^{1}(M,g)$ and
 $d\omega
 (\Div_{f}X)\,d\operatorname{vol}_{g}$,
see~\eqref{E-divf-1} and~\eqref{E-divf-4B}. There exist
domains $B_{i}$ on $M$ such that
 $M=\bigcup\nolimits_{\,i\ge1} B_{i}$,
 $B_{i}\subset B_{i+1}$,
 and
 $\lim_{\,i\to\infty}\int\nolimits_{B_{i}} d\omega=0$.
Then
\[
 \int_{B_{i}} (\Div_{f}X)\,d\operatorname{vol}_{g}
 \overset{\eqref{E-divf-4B}}
 =\int_{B_{i}}\Div_g({f}X)\,d\operatorname{vol}_{g}
 =\int_{B_{i}} d\omega\to0.
\]
But since $\Div_{f}X \ge 0$ or $\Div_{f}X\le0$
on $M$, it follows that $\Div_{f}X = 0$ on $M$.
\end{proof}

We call ${\mathrm\Delta}^{f}\psi = \Div_{f}(\nabla^{f}\psi)$ the ${f}$-\textit{Laplacian for a function $\psi\in C^2(M)$}.
By \eqref{nablaP},
 ${\mathrm\Delta}^{f}\psi = {\mathrm\Delta}^{0}\psi + ({f}E)(\psi)$.



The next theorem extends the
classical result on subharmonic functions.

\begin{theorem}
Let a manifold $(M,G=g+F)$ be complete noncompact, and \eqref{E-cond-PP-stat} hold for an Einstein's connection $\nabla$.
If $\|{f}\nabla^{f}\psi\|_g$ and $\|\psi{f}\nabla^{f}\psi\|_g$ belong to $L^1(M,g)$
for some $\psi\in C^2(M)$ satisfying either ${\mathrm\Delta}^{f}\psi\ge0$ or ${\mathrm\Delta}^{f}\psi\le0$,
then $\nabla^{f}\psi=0$; moreover, if either $f(TM)=TM$ ($\dim M$ is even), or $f(TM)\ne TM$
and ${f}(TM)$ is bracket-generating, then $\psi\!=\const$.
\end{theorem}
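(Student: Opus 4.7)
The plan is to apply Proposition~\ref{L-Div-1} twice: first to $X=\nabla^{f}\psi$ to deduce $\Delta^{f}\psi\equiv 0$, and then to the ``polarised'' vector field $X=\psi\,\nabla^{f}\psi$ to deduce $\|\nabla^{f}\psi\|^{2}\equiv 0$. The first application is essentially immediate: by definition $\Delta^{f}\psi=\Div_{f}(\nabla^{f}\psi)$ already has a definite sign, condition \eqref{E-cond-PP-stat} is assumed, and the hypothesis $\|f\nabla^{f}\psi\|_{g}\in L^{1}(M,g)$ is exactly the integrability condition required by Proposition~\ref{L-Div-1} with $X=\nabla^{f}\psi$.

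The heart of the argument is a Leibniz identity for $\Div_{f}$. Using the Koszul axiom for $\nabla^{f}$ stated after \eqref{E-Pbracket2}, namely $\nabla^{f}_{Y}(\psi Z)=(fY)(\psi)\,Z+\psi\,\nabla^{f}_{Y}Z$, together with the defining property of the $f$-gradient, $g(\nabla^{f}\psi,Y)=\nabla^{f}_{Y}\psi=(fY)\psi$, I compute in a local orthonormal frame $\{e_{i}\}$
\begin{align*}
\Div_{f}(\psi\,\nabla^{f}\psi)
&=\sum\nolimits_{i}g\big(\nabla^{f}_{e_{i}}(\psi\,\nabla^{f}\psi),\,e_{i}\big) \\
&=\sum\nolimits_{i}\big[(fe_{i})(\psi)\,g(\nabla^{f}\psi,e_{i})+\psi\,g(\nabla^{f}_{e_{i}}\nabla^{f}\psi,\,e_{i})\big] \\
&=\|\nabla^{f}\psi\|^{2}+\psi\,\Delta^{f}\psi,
\end{align*}
a clean $f$-analogue of the classical identity $\Div(\psi\nabla\psi)=\|\nabla\psi\|^{2}+\psi\,\Delta\psi$. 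Substituting $\Delta^{f}\psi\equiv 0$ from the first step leaves $\Div_{f}(\psi\,\nabla^{f}\psi)=\|\nabla^{f}\psi\|^{2}\ge 0$; since $\|f(\psi\,\nabla^{f}\psi)\|_{g}=|\psi|\,\|f\nabla^{f}\psi\|_{g}\in L^{1}(M,g)$ by the second hypothesis, a second application of Proposition~\ref{L-Div-1} to $X=\psi\,\nabla^{f}\psi$ forces this non-negative quantity to vanish identically, and hence $\nabla^{f}\psi\equiv 0$.

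For the ``moreover'' clause, I translate $\nabla^{f}\psi=0$ into the pointwise statement $(fX)\psi=0$ for every $X\in\mathfrak{X}_{M}$; equivalently, $d\psi$ annihilates the singular distribution $f(TM)$. If $f(TM)=TM$, then $d\psi\equiv 0$ and $\psi=\const$ on each connected component of $M$. If instead $f(TM)$ is only bracket-generating, I use the elementary identity $[V,W]\psi=V(W\psi)-W(V\psi)$: whenever $V,W$ are smooth sections of $f(TM)$ with $V\psi=W\psi=0$, then $[V,W]\psi=0$, and iterating shows that every iterated Lie bracket of sections of $f(TM)$ also annihilates $\psi$. The bracket-generating hypothesis then says such brackets span $T_{x}M$ at every $x\in M$, so $d\psi\equiv 0$ and again $\psi=\const$.

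The only non-routine step is the Leibniz identity for $\Div_{f}$: it is the one point at which the matching between the $f$-gradient viewed as a vector field and the trace defining $\Div_{f}$ has to line up exactly, and it is what allows the Bochner-style ``polarise and square'' trick to pass through without generating extra curvature or contorsion corrections. Both applications of Proposition~\ref{L-Div-1} and the Chow-type step in the final clause are then routine.
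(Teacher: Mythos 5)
Your proposal is correct and follows essentially the same route as the paper: apply Proposition~\ref{L-Div-1} first to $X=\nabla^{f}\psi$ to get $\Delta^{f}\psi\equiv0$, then use the Leibniz rule $\Div_{f}(\psi Y)=\psi\Div_{f}Y+g(\nabla^{f}\psi,Y)$ with $Y=\nabla^{f}\psi$ and apply Proposition~\ref{L-Div-1} again to $X=\psi\,\nabla^{f}\psi$ to force $\|\nabla^{f}\psi\|^{2}\equiv0$. The only (harmless) deviation is in the final clause, where the paper invokes Chow's theorem for the bracket-generating singular distribution $f(TM)$, while you give the equivalent direct argument that iterated brackets of sections annihilating $\psi$ again annihilate $\psi$, hence $d\psi=0$.
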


\begin{proof} Set $X=\nabla\,^{f}\psi$, then $\Delta^{f}\,\psi=\Div_{f}X$.
By Proposition~\ref{L-Div-1} with $X=\nabla\,^{f}\psi$ and condition $\|{f}\nabla^{f}\psi\|\in L^1(M,g)$,
we get $\Delta\,^{f}\,\psi\equiv0$.
Using
 $\Div_{f}(\psi\cdot Y) = \psi\cdot\Div_{f}\,Y + g(\nabla^{f}\psi,\ Y)$
with $Y=\nabla\,^{f}\psi$,
Proposition~\ref{L-Div-1} with $X=\psi\nabla^{f} \psi$ and condition $\|\psi\,{f}\nabla\,^{f}\psi\|\in L^1(M,g)$, we get
$(\nabla^{f}\psi,\,\nabla^{f}\psi)_{L^2}=0$, hence $\nabla^{f}\psi= 0$.
If $f(TM)=TM$, then $\psi\!=\const$.
If $f(TM)\ne TM$, then
$\nabla^{f}\psi=0$ means that $\psi$ is constant along the curves in $f(TM)$; moreover, if the singular distribution $f(TM)$ is bracket-generating then, by Chow's theorem \cite{Chow}, $\psi={\rm const}$. 
\end{proof}


%


\subsection{The $f$-curvature operator}
\label{sec:R}

\begin{definition}\rm
Define the \textit{second ${f}$-derivative} of an $(p,k)$-tensor $S$ as the $(p,k+2)$-tensor
\begin{equation*}
 (\nabla^{f})^2_{\,X,Y}\,S := \nabla^{f}_{X}\,(\nabla^{f}_{Y}\,S)
 -\nabla^{f}\,\!\!_{\,\nabla^{f}_{X}Y}\,S,\quad X,Y\in\mathfrak{X}_M.
\end{equation*}
Define the ${f}$-\textit{curvature operator} of $\nabla^{f}$ by
standard formula, see Section~\ref{sec:algebroid},
\begin{align*}
 R^{f}_{X,Y} := (\nabla^{f})^2_{\,X,Y}
 -(\nabla^{f})^2_{\,Y,X}
 = \nabla_{X}^{f}\,\nabla^{f}_{Y} -\nabla^{f}_{Y}\,\nabla^{f}_{X}
 -\nabla^{f}_{\,[X,Y]_{f}},\quad X,Y\in\mathfrak{X}_M,
\end{align*}
and set
\begin{equation}\label{Curv-P}
  {R}^{f}(X,Y,Z,W) = g({R}^{f}_{X,Y}Z,\,W),\quad X,Y,Z,W\in\mathfrak{X}_M.
\end{equation}
The ${f}$-\textit{Ricci curvature operator} of $\nabla^{f}$ is defined by the standard way:
\begin{equation}\label{E-Ric0}
 {\rm Ric}^{f} X = \sum\nolimits_{\,i}
 R^{f}_{\,X,e_i}\,e_i,\quad
 {\rm Ric}^{f}(X,Y) = \sum\nolimits_{\,i}
 R^{f}(X,e_i,e_i,Y)
 = g({\rm Ric}^{f} X, Y).
\end{equation}
\end{definition}

The action of $R^{f}$ on $(0,k)$-tensor fields is defined similarly to the action of the Riemannian curvature tensor $R^g$:
\begin{equation}\label{Curv-S}
  (R^{f}_{X,Y}\,S)(X_1,\ldots,X_k) ={\mathfrak{D}}^{f}(X,Y)(S(X_1,\ldots,X_k))
  -\sum\nolimits_{\,i}S(X_1,\ldots R^{f}_{X,Y}X_i,\ldots,X_k).
\end{equation}
To simplify the calculations, define the following tensor:
\begin{align*}
 \Theta_{X,Y} := (\nabla^g_{{f}X} K)_Y - (\nabla^g_{{f}Y} K)_X +[K_X,K_Y]-K_{[X,Y]_f} ,\quad X,Y\in\mathfrak{X}_M.
\end{align*}
Here, $(\nabla^g_{{f}X}\,K)_YZ=\nabla^g_{{f}X}(K_Y Z) -K_{\nabla^g_{{f}X}Y}Z -K_Y(\nabla^g_{{f}X}Z)$.
If
$K(X,Y,Z)=-K(X,Z,Y)$, see \eqref{E-cond-E2}, then $\Theta_{X,Y}$ is skew-symmetric: $g(\Theta_{X,Y}Z,W)=-g(\Theta_{X,Y}W,Z)$.



\begin{proposition}\label{P-RP-properties}
For an Einstein's connection, we get
\begin{enumerate}
\item[$(i)$] ${R}^{f}_{X,Y} Z = R^g_{\,{f}X,{f}Y} Z +\Theta_{X,Y}Z$;
\quad $({R}^{f}_{X,Y}\,\omega)(Z) = -\omega({R}^{f}_{X,Y} Z +\Theta_{X,Y}Z)$ for $\omega\in\Lambda^1(M)$;
\vskip1mm
if \eqref{E-cond-E2} holds, then $g({R}^{f}_{X,Y}\,Z,W) = -g({R}^{f}_{X,Y}\,W,Z)$;
\vskip1mm
\item[$(ii)$] ${R}^{f}_{X,Y}\,\psi={\mathfrak{D}}^{f}(X,Y)\psi$ \ for any $\psi\in C^2(M)$; \ \ if \eqref{E-cond-E2} holds, then ${R}^{f}_{X,Y}\,g=0$;
\vskip1mm
\item[$(iii)$] for every $(1,k)$-tensor $S$ we have
\begin{align}\label{E-Rf-S}
\notag
 \hskip-6mm ({R}^{f}_{X,Y}\,S)(Z_1,\ldots,Z_k) =& (R^g_{\,{f}X,{f}Y}\,S)(Z_1,\ldots,Z_k) +\Theta_{X,Y} S(Z_1,\ldots,Z_k) \\
 & -\sum\nolimits_{\,i} S(Z_1,\ldots \Theta_{X,Y} Z_i, \ldots, Z_k).
\end{align}
\item[$(iv)$] ${R}^{f}(X,Y,Z,W) = R^g({f}X,{f}Y,Z,W) +g(\Theta_{X,Y}Z, W)$;
\vskip1mm
\item[$(v)$] ${R}^{f}(X,Y,Z,W) = -{R}^{f}(Y,X,Z,W)$;

if \eqref{E-cond-E2} holds, then
${R}^{f}(X,Y,Z,W) = -{R}^{f}(X,Y,W,Z)$.
\end{enumerate}
\end{proposition}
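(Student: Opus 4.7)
The backbone of the proof is part (i): once the decomposition $R^f_{X,Y}Z = R^g_{fX,fY}Z + \Theta_{X,Y}Z$ is in hand, the remaining items are formal consequences. I would therefore tackle (i) by direct expansion of the defining identity $R^f_{X,Y} = \nabla^f_X\nabla^f_Y - \nabla^f_Y\nabla^f_X - \nabla^f_{[X,Y]_f}$, substituting $\nabla^f_X = \nabla^g_{fX} + K_X$ throughout and applying the Leibniz rule to every composition.

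For (i), the central algebraic manoeuvre is to expand
$\nabla^g_{fX}(K_Y Z) = (\nabla^g_{fX}K)_Y Z + K_{\nabla^g_{fX}Y}Z + K_Y\nabla^g_{fX}Z$
and antisymmetrize in $(X,Y)$. The cross terms $K_X\nabla^g_{fY}Z - K_Y\nabla^g_{fX}Z$ combine with the $K_{\nabla^g_{fX}Y}Z$-type pieces to yield exactly $[K_X,K_Y]Z$ together with a Levi-Civita bracket piece $K_{[X,Y]_0}Z$, where $[X,Y]_0 := \nabla^g_{fX}Y - \nabla^g_{fY}X$, alongside the symmetric pair $(\nabla^g_{fX}K)_Y - (\nabla^g_{fY}K)_X$. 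Then $\nabla^f_{[X,Y]_f}Z = \nabla^g_{f[X,Y]_f}Z + K_{[X,Y]_f}Z$ is subtracted; rewriting $\nabla^g_{f[X,Y]_f}Z = \nabla^g_{[fX,fY]}Z$ via the anchor identity $[fX,fY] = f[X,Y]_f$ from \eqref{E-anchor} converts the principal part into $R^g_{fX,fY}Z$, and the remaining $K$-terms collect into $\Theta_{X,Y}Z$. The second and third assertions of (i) follow by substituting (i) into \eqref{Curv-S} and invoking the skew-symmetries of $R^g$ and of $\Theta$ (the latter is the observation immediately preceding the proposition).

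Parts (ii)--(v) are then short. For (ii) on functions, since $\nabla^f_X\psi = (fX)\psi$ sees no $K$-contribution, a direct computation gives $R^f_{X,Y}\psi = [fX,fY]\psi - (f[X,Y]_f)\psi = \mathfrak{D}^f(X,Y)\psi$. For $R^f_{X,Y}g = 0$ under \eqref{E-cond-E2}, I would apply \eqref{Curv-S} with $S = g$; the $\mathfrak{D}^f$-term drops and what remains, $-g(R^f_{X,Y}Z,W) - g(Z,R^f_{X,Y}W)$, vanishes by the endomorphism skew-symmetry established in (i). Part (iii) is the formal extension of (i) to $(1,k)$-tensors via \eqref{Curv-S} together with the Leibniz-type action of $R^g_{fX,fY}$ and $\Theta_{X,Y}$ on each slot; (iv) is just (i) paired with $g$; (v)'s antisymmetry in $(X,Y)$ is built into the definition of $R^f_{X,Y}$, and the $(Z,W)$ skew-symmetry under \eqref{E-cond-E2} follows from (iv) combined with the standard skew-symmetry of $R^g$ in its last two entries and the stated skew-symmetry of $\Theta_{X,Y}$.

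The main obstacle is the bookkeeping in (i): reconciling the Levi-Civita bracket $[X,Y]_0$ that emerges naturally from antisymmetrization with the $f$-bracket $[X,Y]_f$ that appears in $\nabla^f_{[X,Y]_f}$, which by \eqref{E-Pbracket-b} differ by $T(X,Y)$. The resulting $K_{T(X,Y)}$ residue, together with the discrepancy between $\nabla^g_{[fX,fY]}$ and $\nabla^g_{f[X,Y]_f}$ (measured by $\mathfrak{D}^f$), must be absorbed into $\Theta$, and this is clean exactly when the anchor condition $\mathfrak{D}^f = 0$ of the almost Lie algebroid holds. A secondary verification is the tensoriality of $\Theta_{X,Y}Z$ in each slot, which is automatic because each non-tensorial piece of $\Theta$ arises as a compensating Koszul-failure term in the expansion of the manifestly tensorial $R^f_{X,Y}Z$.
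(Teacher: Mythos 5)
Your overall plan --- prove (i) by expanding $\nabla^f_X=\nabla^g_{fX}+K_X$ and then read off (ii)--(v) formally --- mirrors the paper, but your proof of (i), on which everything else rests, has a genuine gap. You convert $\nabla^g_{f[X,Y]_f}Z$ into $\nabla^g_{[fX,fY]}Z$ ``via the anchor identity $[fX,fY]=f[X,Y]_f$ from \eqref{E-anchor}''. In this paper that identity is not automatic: it is exactly the condition $\mathfrak{D}^{f}=0$, which is shown to be equivalent to the \emph{extra} hypothesis \eqref{E-condPP}, whereas the proposition assumes only that $\nabla$ is an Einstein's connection; indeed \eqref{Curv-S} and item (ii) carry the term $\mathfrak{D}^{f}(X,Y)$ precisely because it need not vanish (if it always vanished, (ii) would be vacuous). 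Without that identity your principal part is $R^g_{fX,fY}Z+\nabla^g_{\mathfrak{D}^{f}(X,Y)}Z$, not $R^g_{fX,fY}Z$. Moreover, even granting $\mathfrak{D}^{f}=0$, your $K$-bookkeeping does not land on the paper's $\Theta$: the collected $K$-terms are $(\nabla^g_{fX}K)_Y-(\nabla^g_{fY}K)_X+[K_X,K_Y]-K_{T(X,Y)}$ acting on $Z$, whereas $\Theta_{X,Y}$ is defined with $-K_{[X,Y]_f}$; by \eqref{E-Pbracket-b} the mismatch is $K_{[X,Y]_0}Z$ with $[X,Y]_0=\nabla^g_{fX}Y-\nabla^g_{fY}X$, and saying this residue ``must be absorbed into $\Theta$'' is not an argument, since $\Theta$ is a fixed, explicitly defined tensor: you must either prove the residue vanishes (it does not in general) or add hypotheses. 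You flag both residues yourself as obstacles but never close them, so (i) --- and with it (iii), (iv), (v) and the second halves of (i), (ii), which you derive from it --- is not established under the stated hypotheses.

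For comparison, the paper's own proof of (i) is a one-line ``by definition of $R^{f}_{X,Y}$'', i.e.\ it implicitly treats $\Theta$ as the defect $R^{f}_{X,Y}-R^g_{fX,fY}$, and it obtains (ii)--(v) by essentially the formal steps you propose (for the $(Z,W)$-skew-symmetry in (v) it polarizes $g(R^{f}_{X,Y}Z,Z)=0$, which is equivalent to your route through (iv) and the skew-symmetry of $\Theta$). So you have put your finger on a real subtlety that the paper glosses over; but a complete proof must either verify that the displayed formula for $\Theta$ equals that defect --- your own expansion shows it differs by $\nabla^g_{\mathfrak{D}^{f}(X,Y)}+K_{[X,Y]_0}$ --- or invoke the conditions under which this difference disappears. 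A smaller instance of the same issue: in (ii) you assert that ``the $\mathfrak{D}^f$-term drops'' when \eqref{Curv-S} is applied to $S=g$; the term $\mathfrak{D}^{f}(X,Y)(g(Z,W))$ is the derivative of a function along a generally nonzero vector field and does not drop unless $\mathfrak{D}^{f}=0$ (the paper instead disposes of it, loosely, via $\nabla^g_{\mathfrak{D}^{f}(X,Y)}\,g=0$).
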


\begin{proof}
 (i)
 By definition of ${R}^{f}_{X,Y}$,
we~get the first two equalities.
Since $\Theta_{X,Y}:TM\to TM$ is skew-symmetric, ${R}^{f}_{X,Y}$ is also skew-symmetric.

\noindent
(ii) We calculate
\begin{equation*}
 {R}^{f}_{X,Y}\,\psi = {f}X({f}Y(\psi))
 -{f}Y({f}X(\psi)) -({f}[X,Y]_{f})\psi
 = {\mathfrak{D}}^{f}(X,Y)\psi.
\end{equation*}
Next, using (i) we obtain
\begin{eqnarray*}
 && g({R}^{f}_{X,Y}\,Z, W) =
 g(R^g_{\,{f}X,{f}Y}\,Z, W) +g(\Theta_{X,Y}Z,\,W).
\end{eqnarray*}
Similarly, $g({R}^{f}_{X,Y}W, Z)
= g(R^g_{\,{f}X,{f}Y}W, Z)+g(\Theta_{X,Y}W, Z)$.
By this, \eqref{Curv-S} and $\nabla^g_{{\mathfrak{D}}^{f}(X,Y)}\,g =0$, we~get
\begin{align*}
 & ({R}^{f}_{X,Y}\,g)(Z,W) = -g({R}^{f}_{X,Y}\,Z,W) - g(Z, {R}^{f}_{X,Y}\,W) \\
 & = (R^g_{\,{f}X,{f}Y}\,g)(Z,W) -g(\Theta_{X,Y}Z, W) -g(\Theta_{X,Y}W, Z) \\
 &
 = (R^g_{\,{f}X,{f}Y}\,g)(Z,W) .
\end{align*}
Using $R^g_{\,{f}X,{f}Y}\,g=0$ in the above,
we obtain ${R}^{f}_{X,Y}\,g=0$.

\noindent
(iii) From the above and \eqref{Curv-S} the required result \eqref{E-Rf-S} follows.

\noindent
(iv) The equalities follow from \eqref{Curv-P} and (i).

\noindent
(v) Since ${R}^{f}_{X,Y}\,Z=-{R}^{f}_{Y,X}\,Z$, see (i), the first equality follows. For the second one, we use~\eqref{E-cond-E2}:
 $g({R}^{f}_{X,Y} Z,\,Z) =0$;
thus, the claim follows from
$g({R}^{f}_{X,Y}(Z+W),\,Z+W)=0$.
\end{proof}

Similarly, we define the ${f}$-curvature operator of the conjugate ${f}$-connection $\breve\nabla^{f}$,
\begin{equation*}
 \breve R^{f}_{X,Y}\,Z =\breve\nabla_{X}^{f}\,\breve\nabla^{f}_{Y}\,Z-\breve\nabla^{f}_{Y}\,\breve\nabla^{f}_{X}\,Z -\breve\nabla^{f}_{\,[X,Y]^\smile_{f}}Z,\quad X,Y,Z\in\mathfrak{X}_M.
\end{equation*}
If we assume \eqref{E-cond-E2}, i.e., $K^*_X=-K_X$, then using \eqref{E-brave-nabla} we get $R^{f}_{X,Y} = \breve R^{f}_{X,Y}$.

\begin{proposition} For an Einstein's connection
satisfying \eqref{E-cond-E2},
we~get
\begin{equation}\label{E-Ric-K}
 {\rm Ric}^{f}(X,Y) = {\rm Ric}^{0}(X,Y)
 + g\big(\sum\nolimits_{\,i} \Theta_{X,e_i} e_{i},\, Y\big),
\end{equation}
where
${\rm Ric}^{{0}}(X,Y) :=
 \sum\nolimits_{\,i}R^g({f}X, {f}e_{i}, e_{i}, Y)$
is the Ricci tensor of $\nabla^{{0}}$.
\end{proposition}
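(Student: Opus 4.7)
The plan is straightforward and essentially mechanical: unwind the definition of the $f$-Ricci tensor and apply the pointwise decomposition already established in Proposition~\ref{P-RP-properties}(iv). First, by \eqref{E-Ric0}, in a local orthonormal frame $\{e_i\}$ we have
\[
  {\rm Ric}^{f}(X,Y) = \sum\nolimits_{\,i} R^{f}(X, e_i, e_i, Y).
\]
Next, I would apply Proposition~\ref{P-RP-properties}(iv), namely $R^{f}(X,Y,Z,W) = R^{g}(fX, fY, Z, W) + g(\Theta_{X,Y} Z, W)$, to each summand with the substitutions $Y \mapsto e_i$, $Z \mapsto e_i$, $W \mapsto Y$. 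The first resulting contribution $\sum_i R^{g}(fX, fe_i, e_i, Y)$ is precisely ${\rm Ric}^{0}(X,Y)$ by the definition given in the statement, and the second contribution collapses by linearity of $g$ in its first slot to $g\bigl(\sum_i \Theta_{X,e_i} e_i,\, Y\bigr)$. Combining these two pieces yields \eqref{E-Ric-K} directly.

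The role of the hypothesis \eqref{E-cond-E2} is to ensure that the objects appearing in the statement are the well-behaved geometric tensors one expects: under \eqref{E-cond-E2} the endomorphism $\Theta_{X,Y}$ is skew-symmetric (as noted just before Proposition~\ref{P-RP-properties}), and ${R}^{f}$ enjoys the skew-symmetry ${R}^{f}(X,Y,Z,W) = -{R}^{f}(X,Y,W,Z)$ from Proposition~\ref{P-RP-properties}(v). This legitimizes the contraction defining ${\rm Ric}^{f}$ as a genuine $(0,2)$-tensor and matches the classical Ricci construction; it also guarantees that the formula is independent of the choice of the orthonormal frame $\{e_i\}$, since $\Theta_{X,Y}$ is then $g$-skew and the trace is a tensorial operation.

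Since the decomposition in Proposition~\ref{P-RP-properties}(iv) has already done the bulk of the work, there is no substantial obstacle here. The whole argument is a single trace over an orthonormal frame of an identity already proven; the only thing to be careful about is the indexing, so that the summation variable $e_i$ is placed consistently in the second and third slots of $R^{f}(\cdot,\cdot,\cdot,\cdot)$ and in the lower index of $\Theta_{X,e_i}$, matching both the defining formula of ${\rm Ric}^{f}$ in \eqref{E-Ric0} and of ${\rm Ric}^{0}$ in the statement.
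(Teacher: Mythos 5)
Your proposal is correct and follows essentially the same route as the paper: the paper's proof likewise takes the definition ${\rm Ric}^{f}(X,Y)=\sum_i R^{f}(X,e_i,e_i,Y)$ from \eqref{E-Ric0} and applies Proposition~\ref{P-RP-properties}(iv) termwise to split off ${\rm Ric}^{0}(X,Y)$ and the trace of $\Theta$. Your added remarks on the role of \eqref{E-cond-E2} are consistent with the paper, which cites that condition (together with \eqref{E-tordfnew2}) without elaboration.
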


\begin{proof}
Using (iv) of Proposition~\ref{P-RP-properties},
\eqref{E-tordfnew2} and \eqref{E-cond-E2}, we have
\begin{align*}
 {\rm Ric}^{f}(X,Y) &= \sum\nolimits_{\,i}{R}^{f}(X, e_{i}, e_{i}, Y)
 ={\rm Ric}^{0}(X,Y) + \sum\nolimits_{\,i} g( \Theta_{X,e_i} e_{i},\,Y) .
\end{align*}
From the above the claim follows.
\end{proof}


The skew-symmetric tensor ${f}$ induces a self-adjoint endomorphism ${\cal P}$ of $\Lambda^{2}(M)$:
\[
 {\cal P}(X\wedge Y) ={f}X\wedge {f}Y,\quad
 {\cal P}^*(X\wedge Y)={f}^*X\wedge {f}^*Y
 \quad \Longrightarrow \quad {\cal P}^*={\cal P}.
\]
The curvature operator $R^g_{X,Y}$ can be seen as a self-adjoint linear operator ${\cal R}^g$
on the space $\Lambda^{2}(M)$ of bivectors,
e.g., \cite{besse,Peter}.
 Similarly, we consider $R^{f}_{X,Y}$ as a linear operator,
or as a corresponding bilinear form on $\Lambda^{2}(M)$.
For this,
using assumption \eqref{E-cond-E2},
define a linear operator ${\cal K}$ on $\Lambda^{2}(M)$~by
\[
 g({\cal K}(X\wedge Y),Z\wedge W) := g(\Theta_{X,Y}Z,\ W).
\]
Put ${\cal R}^{f} := {\cal R}^{g}\,{\cal P} +{\cal K}$ and
$\breve{\cal R}^{f} := {\cal R}^{g}\,{\cal P} -{\cal K}$, i.e.,
\begin{align*}
 & {\cal R}^{f}(X\wedge Y)={\cal R}^{g}\,{\cal P}(X\wedge Y)+{\cal K}(X\wedge Y)
 ={\cal R}^{0}({f}X\wedge {f}Y)+{\cal K}(X\wedge Y),\\
 & {\cal R}^{f}(X\wedge Y,Z\wedge W) = g({\cal R}^{\,{f}}(X\wedge Y),Z\wedge W),\\
 &\breve{\cal R}^{f}(X\wedge Y)
 ={\cal R}^{g}\,{\cal P}(X\wedge Y)-{\cal K}(X\wedge Y)
 ={\cal R}^{g}({f}X\wedge {f}Y)-{\cal K}(X\wedge Y) ,\\
 & \breve{\cal R}^{f}(X\wedge Y,Z\wedge W) =
 g(\breve{\cal R}^{\,{f}}(X\wedge Y),Z\wedge W).
\end{align*}
The above ${\cal R}^{f}$ generalizes
${\cal R}^{0} := {\cal R}^g\,{\cal P}$ associated with $\nabla^0$, see \cite{rp-2}.
Using known properties of ${\cal R}^{g}$ and property (iv) of $R^{f}$, see Proposition~\ref{P-RP-properties}, we have
\begin{align*}
 & g({\cal R}^{f}(X\wedge Y),Z\wedge W) =
 g({\cal R}^{g}({f}X\wedge {f}Y)+{\cal K}(X\wedge Y),\,Z\wedge W) = R^{f}(X,Y,W,Z),\\
 & g(\breve{\cal R}^{f}(X\wedge Y),Z\wedge W) =
 g({\cal R}^{g}({f}X\wedge {f}Y)-{\cal K}(X\wedge Y),\,Z\wedge W) = \breve R^{f}(X,Y,W,Z).
\end{align*}

\subsection{The Weitzenb\"{o}ck $f$-curvature operator}
\label{sec:Ric}

\noindent
Here, we use  an Einstein's connection $\nabla$ to introduce
the central concept of the Bochner technique: the Weitzenb\"{o}ck $f$-curvature operator on tensors,
gene\-ralizing the
operator \eqref{E-Ric},
for the case of a manifold $(M,G=g+F)$ with an $f$-connection~$\nabla^{f}$.

\begin{definition}\rm
 Define the \emph{Weitzenb\"{o}ck $f$-curvature operator}
 on $(0,k)$-tensors $S$ on a manifold $(M,G=g+F)$~by
\begin{equation}\label{E-Ric-P}
 \Re^{f}(S)(X_{1},\ldots,X_{k})
 =\sum\nolimits_{\,a=1}^{k}\sum\nolimits_{\,i}({R}^{f}_{\,e_{i},X_{a}}\,S)(\underbrace{X_{1},\ldots,e_{i}}_{a},\ldots,X_{k}).
\end{equation}
The operators $\breve{\Re}^{\,{f}}$ and ${\Re}^{0}$ are defined similarly using
$\breve\nabla^{f}$ and $\nabla^{0}$.
\end{definition}

For a form $\omega\in\Lambda^k(M)$, the $\Re^{f}(\omega)$ is skew-symmetric, see \eqref{Curv-S} with ${\mathfrak{D}}^{f}=0$.
For $k=1$, $\Re^{f}$ reduces to ${\rm Ric}^{f}$.
For $k\ge2$, using \eqref{Curv-S}, the formula from \eqref{E-Ric-P} reads as
\begin{align}\label{E-Ric-Pb}
 \nonumber
 \Re^{f}(S)(X_{1},\ldots,X_{k})
 =& -2\sum\limits_{\,i,j,a;\,b<a}R^{f}(e_i,X_a,e_j,X_b)\cdot S(\underbrace{X_1,\ldots,e_j}_{b},\underbrace{\ldots, e_i}_{a-b},\ldots,X_k)\\
 & +\,
 \sum\nolimits_{\,a=1}^{k}\sum\nolimits_{\,i}
 {\rm Ric}^{f}(e_i,X_a)\cdot S(\underbrace{X_1,\ldots, e_i}_{a},\ldots, X_k).
\end{align}


The following lemma represents $\Re^{f}$ using $\Re^{\,{0}}$ and the difference tensor $K$.

\begin{lemma}
Let an Einstein's connection $\nabla$ on a manifold $(M,G=g+F)$ satisfy \eqref{E-cond-E2}.
Then
\begin{equation}\label{E-Ric-hat-Ric}
 \Re^{f} \omega=\Re^{\,{0}}\omega - \mathfrak{K}\,\omega,
\end{equation}
where the operator $\mathfrak{K}$ acts on forms $\omega\in\Lambda^k(M)$ for $k\ge2$ by
\begin{align}\label{E-K-frak}
\notag
 (\mathfrak{K}\,\omega)(X_{1},\ldots, X_{k}) =& \ 2\sum\nolimits_{\,i,j,\,b<a} g(\Theta_{e_i, X_a}e_{j}, \ X_b)\,
 \omega(\underset{b}{\underbrace{X_{1},\ldots,e_{j}}},\underset{a-b}{\underbrace{\ldots,e_{i}}}\ldots,X_{k})  \\
 &  +\sum\nolimits_{\,a=1}^{k}\sum\nolimits_{\,i} g\big(\sum\nolimits_{\,j} \Theta_{e_i,\,e_{j}}e_{j},\,X_a\big)
 \,\omega(\underbrace{X_{1},\ldots, e_i}_{a},\ldots,X_{k}) ,
\end{align}
and $(\mathfrak{K}\,\omega)(X) = \sum\nolimits_{\,i}
g\big(\sum\nolimits_{\,j} \Theta_{e_i,\,e_{j}}e_{j},\,X\big)\,\omega(e_i)$ for $k=1$.
\end{lemma}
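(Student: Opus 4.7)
The plan is a direct substitution calculation using the decompositions of $R^{f}$ and $\mathrm{Ric}^{f}$ into their $\nabla^{0}$-counterparts plus $\Theta$-corrections. The starting point is the pointwise formula \eqref{E-Ric-Pb} for $\Re^{f}(\omega)$ when $k\ge 2$, together with the analogous expansion for $\Re^{0}(\omega)$ obtained by replacing $\nabla^{f}$ with $\nabla^{0}$ throughout. Both expansions are available because assumption \eqref{E-cond-E2} secures the skew-symmetries of $R^{f}$ in its last two arguments (Proposition~\ref{P-RP-properties}(v)) that are needed to pass from \eqref{E-Ric-P} to \eqref{E-Ric-Pb}, and the same symmetries hold trivially for $R^{0}$.

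First I would substitute Proposition~\ref{P-RP-properties}(iv), namely
\[
 R^{f}(e_i,X_a,e_j,X_b) = R^{g}(fe_i,fX_a,e_j,X_b) + g(\Theta_{e_i,X_a}e_j,X_b),
\]
into the quadratic $R^{f}$-sum in \eqref{E-Ric-Pb}. The $R^{g}(fe_i,fX_a,\cdot,\cdot)$ contribution is precisely the quadratic sum appearing in the expansion of $\Re^{0}(\omega)$, while the $\Theta$-contribution, carrying the overall factor $-2$, reproduces the first line of \eqref{E-K-frak} up to sign. Second, I would substitute \eqref{E-Ric-K},
\[
 \mathrm{Ric}^{f}(e_i,X_a) = \mathrm{Ric}^{0}(e_i,X_a) + g\!\bigl(\textstyle\sum_j \Theta_{e_i,e_j}e_j,\,X_a\bigr),
\]
into the linear Ricci sum of \eqref{E-Ric-Pb}. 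The $\mathrm{Ric}^{0}$ part combines with the previously produced $R^{g}$-piece to assemble exactly $\Re^{0}(\omega)$, and the remaining $\Theta$-part reproduces the second line of \eqref{E-K-frak}. Collecting the two $\Theta$-contributions yields $-\mathfrak{K}\omega$, which is \eqref{E-Ric-hat-Ric}. For the case $k=1$ the quadratic sum is vacuous (there is no $b<a=1$), so only the Ricci substitution is needed, and it delivers the stated $k=1$ formula for $\mathfrak{K}\omega$ directly from \eqref{E-Ric-K}.

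The main obstacle I anticipate is combinatorial rather than conceptual: one must verify that the insertion pattern ``$e_j$ in slot $b$ and $e_i$ in slot $a$'' appearing in \eqref{E-Ric-Pb} aligns verbatim with the pattern used to define $\mathfrak{K}$ in \eqref{E-K-frak}, and one must carefully track signs coming from the coefficient $-2$ in the first sum together with the skew-symmetries $R^{f}(X,Y,Z,W)=-R^{f}(Y,X,Z,W)=-R^{f}(X,Y,W,Z)$ guaranteed by \eqref{E-cond-E2}. A secondary point is checking that the step from \eqref{E-Ric-P} to \eqref{E-Ric-Pb} is legitimate: the derivation drops the anchor-derivation term $\mathfrak{D}^{f}(e_i,X_a)(\omega(X_1,\ldots,X_k))$ from \eqref{Curv-S}. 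This is automatic if one further assumes \eqref{E-condPP} (so that $\mathfrak{D}^{f}\equiv 0$); under \eqref{E-cond-E2} alone one must either build this as part of the convention used in defining $\Re^{f}$ on forms, or show that after summation in $i$ this term cancels by a reindexing and skew-symmetry argument. Apart from this subtlety, the remainder of the argument is a bookkeeping exercise.
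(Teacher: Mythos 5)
Your proposal is correct and follows essentially the same route as the paper's proof: substituting Proposition~\ref{P-RP-properties}(iv) and \eqref{E-Ric-K} into \eqref{E-Ric-Pb}, collecting the $R^{0}$- and $\mathrm{Ric}^{0}$-pieces into $\Re^{0}\omega$ and the $\Theta$-pieces into the correction term, with the $k=1$ case handled by \eqref{E-Ric-K} alone. The subtlety you flag about the $\mathfrak{D}^{f}$-term in \eqref{Curv-S} is treated in the paper exactly as you suggest, namely as part of the convention under which \eqref{E-Ric-Pb} is written for forms.
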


\begin{proof}
Using (iv) of Proposition~\ref{P-RP-properties} and \eqref{E-Ric-K}, we find
\begin{align}\label{E-K-frak-tmp}
\notag
 & {R}^{f}(e_i,X_a,e_j,X_b) = {R}^{0 }(e_i,X_a,e_j,X_b) +g(\Theta_{e_i, X_a}e_{j}, X_b),\\
 &\hskip-3mm {\rm Ric}^{f}(e_i,X_a) = {\rm Ric}^{\,{0}}(e_i,X_a)
 + g(\sum\nolimits_{\,j} \Theta_{e_i,\,e_{j}}e_{j},\,X_a) .
\end{align}
Using \eqref{E-K-frak-tmp} in \eqref{E-Ric-Pb}, by linearity in the curvature and \eqref{E-Rf-S} we get \eqref{E-Ric-hat-Ric} and \eqref{E-K-frak}.
\end{proof}

 We generalizes \eqref{E-Wei0} and \cite[Theorem 2]{rp-2} to the case of an Einstein's connection.

\begin{theorem}
Let an Einstein's connection $\nabla$ satisfy \eqref{E-cond-E2}.
Then the
\textit{Weitzenb\"{o}ck type decomposition formula} is~valid for any~$\omega\in\Lambda^k(M)$:
\begin{equation}\label{E-Wei}
 \Delta^{f}_H\,\omega = \breve\nabla^{*{f}}\,\nabla^{f}\,\omega +\Re^{f}(\omega).
\end{equation}
\end{theorem}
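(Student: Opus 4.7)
The plan is to imitate the classical Weitzenböck derivation, carefully tracking the modifications produced by the anchor $f$ and the difference tensor $K$. First, the hypothesis \eqref{E-cond-E2} activates \eqref{E-58-59} together with the last claim of Lemma~\ref{L-03} and \eqref{E-1deg-derB}, so that $\breve\delta^{f}=\delta^{f}$, $\breve\nabla^{*f}=\nabla^{*f}$, and $\breve d^{f}=d^{f}$. Hence $\Delta^{f}_H=d^{f}\delta^{f}+\delta^{f}d^{f}$, and the target reduces to
\[
(d^{f}\delta^{f}+\delta^{f}d^{f})\omega=\nabla^{*f}\nabla^{f}\omega+\Re^{f}(\omega).
\]

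Next, I would fix a point $x\in M$ and choose a local $g$-orthonormal frame $\{e_i\}$ with $\nabla^{g}_Y e_i|_x=0$ for every $Y$. Using \eqref{E-dP} and \eqref{E-nabla-astP}, one gets the pointwise representations $d^{f}\omega=\sum_i e^i\wedge\nabla^{f}_{e_i}\omega$ and $\delta^{f}\omega=-\sum_i \iota_{e_i}\nabla^{f}_{e_i}\omega$ at $x$. Substituting these into $d^{f}\delta^{f}\omega+\delta^{f}d^{f}\omega$ and applying the standard anticommutation identity $e^i\wedge\iota_{e_j}+\iota_{e_j}(e^i\wedge\,\cdot)=\delta_{ij}\,\id$ on $\Lambda^\ast(M)$, the expression splits into a diagonal part collapsing to $-\sum_i \nabla^{f}_{e_i}\nabla^{f}_{e_i}\omega=\nabla^{*f}\nabla^{f}\omega$, plus an off-diagonal part of the shape $\sum_{i\ne j}e^i\wedge\iota_{e_j}\bigl(\nabla^{f}_{e_i}\nabla^{f}_{e_j}-\nabla^{f}_{e_j}\nabla^{f}_{e_i}\bigr)\omega$.

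To identify the off-diagonal part with $\Re^{f}(\omega)$, I would apply the Ricci identity $\nabla^{f}_{X}\nabla^{f}_{Y}-\nabla^{f}_{Y}\nabla^{f}_{X}-\nabla^{f}_{[X,Y]_f}=R^{f}_{X,Y}$ following directly from the definition of $R^{f}$. The residual frame-twist term $\nabla^{f}_{[e_i,e_j]_f}\omega$ is accounted for via \eqref{E-Pbracket-b}, while the symmetric parts of the attendant contractions vanish thanks to \eqref{E-cond-E2} (which yields $E^*=-E$). The surviving sum is $\sum_{i,j}e^i\wedge\iota_{e_j}(R^{f}_{e_i,e_j}\omega)$, which is precisely the tensorial contraction defining $\Re^{f}(\omega)$ in \eqref{E-Ric-P}; expanding $R^{f}_{e_i,e_j}\omega$ via \eqref{Curv-S} reproduces \eqref{E-Ric-Pb} and completes the identification.

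The chief obstacle is that $\nabla^{f}$ is not a genuine linear connection, so no fully $\nabla^{f}$-normal frame exists: even after selecting a $\nabla^{g}$-normal $\{e_i\}$, one is left with $\nabla^{f}_Y e_i|_x=K_Y e_i|_x\ne 0$. The heart of the proof therefore consists in showing that every such frame-twist correction cancels, a cancellation driven by the skew-symmetry hypothesis \eqref{E-cond-E2}, by the identity $[X,Y]_f=[X,Y]_0+T(X,Y)$ of \eqref{E-Pbracket-b}, and by $E^*+E=0$. A secondary bookkeeping point is to verify that the $\mathfrak{D}^{f}$-contribution present in $R^{f}_{X,Y}$ on scalars (Proposition~\ref{P-RP-properties}(ii)) is consistent with the tensorial expansion of $\Re^{f}$ in \eqref{E-Ric-Pb}, so that no extraneous scalar piece leaks into the final curvature term.
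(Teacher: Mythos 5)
Your proposal follows essentially the same route as the paper: under \eqref{E-cond-E2} one identifies $\breve\delta^{\,f}$ with $\delta^{f}$ and $\breve\nabla^{*f}$ with $\nabla^{*f}$, expands $d^{f}\breve\delta^{\,f}+\breve\delta^{\,f}d^{f}$ in an orthonormal frame, and recognizes the antisymmetrized second $f$-derivatives $(\nabla^{f})^2_{e_j,X_a}-(\nabla^{f})^2_{X_a,e_j}$ as $R^{f}_{e_j,X_a}$, hence as $\Re^{f}(\omega)$ via \eqref{E-Ric-P}; your wedge/contraction bookkeeping is only a reformulation of the paper's index computation. The one point to tighten is that the diagonal part should be written with second $f$-derivatives, $-\sum_i(\nabla^{f})^{2}_{e_i,e_i}\omega=\nabla^{*f}\nabla^{f}\omega$, the discrepancy with the iterated derivatives $-\sum_i\nabla^{f}_{e_i}\nabla^{f}_{e_i}\omega$ being exactly the frame-twist terms you flag, which cancel because \eqref{E-cond-E2} makes $K_X$ skew-symmetric and hence $\nabla^{f}g=0$ by \eqref{Eq-proof-K}.
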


\begin{proof}
Using the conditions, $E=E^*=0$, Lemma~\ref{L-03} and \eqref{E-58-59}, we find
\begin{align*}
 d^{f}\,\breve{\delta}^{f}\omega( X_{1},\ldots,X_{k}) & =-\sum\nolimits_{\,j} d^{f}\,\nabla_{e_{j}}^{\,{f}}\omega( e_{j},X_{1},\ldots,X_{k}) \\
 &=\sum\nolimits_{\,j}\sum\nolimits_{a=0}^{k-1}(-1)^{a}\nabla_{X_{a+1}}^{\,{f}}\nabla_{e_{j}}^{\,{f}}\omega(e_{j},X_{1},\ldots X_{a},X_{a+2}\ldots,X_{k}) \\
 &=-\sum\nolimits_{\,j}\sum\nolimits_{a=0}^{k-1}\nabla_{X_{a+1}}^{\,{f}}
 \nabla_{e_{j}}^{f}\omega\big(\underset{a+1}{\underbrace{X_{1},\ldots e_{j}}},\ldots,X_{k}\big) \\
 &= -\sum\nolimits_{\,j,a}((\nabla^{f})^2_{X_{a+1},e_{j}}\,\omega) \big( \underset{a+1}{\underbrace{X_{1},\ldots e_{j}}},\ldots,X_{k}\big), \\
 \breve{\delta}^{f}\,d^{f}\omega( X_{1},\ldots,X_{k}) & = \nabla^{*{f}}(d^{f}\omega)( X_{1},\ldots,X_{k}) \\
& = -\sum\nolimits_{\,j}\nabla_{e_{j}}^{f}( d^{f}\omega)( e_{j},X_{1},\ldots,X_{k})
  = -\sum\nolimits_{\,j}\nabla^{f}_{e_{j}}\,\nabla^{f}_{e_{j}}\omega( X_{1},\ldots,X_{k}) \\
& + \sum\nolimits_{\,j}\sum\nolimits_{a=0}^{k-1}(-1)^{a}\nabla^{f}_{e_{j}}\,\nabla^{f}_{X_{a+1}}\omega( e_{j},X_{1},\ldots,X_{a},X_{a+2},\ldots,X_{k}) \\
& =(\nabla^{*{f}}\,\nabla^{f}\omega)( X_{1},\ldots,X_{k})
+\sum\nolimits_{\,j,a}( (\nabla^{f})^2_{e_{j},X_{a+1}}\,\omega)(\underset{a+1}{\underbrace{X_{1},\ldots e_{j}}},\ldots,X_{k}) .
\end{align*}
From the above we have
\begin{align*}
\Delta^{f}_H\,\omega = (\nabla^{*{f}}\nabla^{f}\omega)( X_{1},\ldots,X_{k})
{+}\sum\limits_{\,j,a}\big( (\nabla^{f})^2_{e_{j},X_{a+1}}\,\omega
-(\nabla^{f})^2_{X_{a+1},e_{j}}\,\omega\big)(\underset{a+1}{\underbrace{X_{1},\ldots e_{j}}},\ldots,X_{k}).
\end{align*}
Thus, we have \eqref{E-Wei}.
\end{proof}

\begin{example}\rm
For vector fields and 1-forms, $\Re^{f}$ reduces to the kind of usual Ricci curvature, see \eqref{E-Ric0} and \eqref{E-Ric-Pb}.
We have $\Re^{f}(\omega)(X)=\omega({\rm Ric}^{f}(X))$ for any $\omega\in\Lambda^1(M)$; thus, \eqref{E-Wei} reads as
\[
  \Delta^{f}_H\,\omega = \breve\nabla^{*{f}}\,\nabla^{f}\omega +{\rm Ric}^{f}(\omega^\sharp).
\]
\end{example}

 For a bivector $X\wedge Y\in\Lambda^{2}(M)$, consider a map ${\cal R}^{f}(X\wedge Y) : \mathfrak{X}_M\to \mathfrak{X}_M$, given by
\begin{eqnarray*}
 g({\cal R}^{f}(X\wedge Y) Z,W) \eq g({\cal R}^{f}(X\wedge Y), W\wedge Z) = R^{f}(X,Y,Z,W) \\
 \eq R^{g}({f}X,{f}Y,Z,W) +g(\Theta_{X,Y}Z, W) ,
\end{eqnarray*}
see (iv) of Proposition~\ref{P-RP-properties}.
Since bivectors are generators of the vector space $\Lambda^{2}(M)$, we get
a map ${\cal R}^{f}(\xi): \mathfrak{X}_M\to \mathfrak{X}_M$, where $\xi\in\Lambda^{2}(M)$ (similarly to algebraic curvature operator ${\cal R}^{g}(\xi)$).

\begin{lemma}\label{lmab+}
For an Einstein's connection $\nabla$ satisfying \eqref{E-cond-E2},
the map ${\cal R}^{f}(\xi)$, where $\xi\in\Lambda^{2}(M)$, is skew-symmetric:
 $g({\cal R}^{f}(\xi)W,\,Z) =-g({\cal R}^{f}(\xi) Z,\,W)$.
\end{lemma}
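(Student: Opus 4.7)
The plan is to reduce the statement to the skew-symmetry of $R^{f}$ in its last two arguments, which was already established in Proposition~\ref{P-RP-properties}(v) under assumption \eqref{E-cond-E2}, and then to extend this from decomposable (simple) bivectors to all of $\Lambda^{2}(M)$ by linearity. The key observation is that by the definition given immediately before the lemma, the map $\mathcal{R}^{f}(\xi)$ is introduced so that its action on a pair $(Z,W)$ agrees with the four-tensor $R^{f}(X,Y,Z,W)$ whenever $\xi = X\wedge Y$.

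First I would take $\xi=X\wedge Y$ to be a simple bivector. By the identity
$g(\mathcal{R}^{f}(X\wedge Y)Z,W)=R^{f}(X,Y,Z,W)$
recorded just above the lemma, the desired skew-symmetry in $(Z,W)$ is equivalent to the symmetry property $R^{f}(X,Y,Z,W)=-R^{f}(X,Y,W,Z)$. But this is precisely the second identity of Proposition~\ref{P-RP-properties}(v), whose proof under \eqref{E-cond-E2} goes through the skew-symmetry of the endomorphism $\Theta_{X,Y}$ and of $R^{f}_{X,Y}\colon TM\to TM$. Hence for all $Z,W\in\mathfrak{X}_M$,
\[
 g(\mathcal{R}^{f}(X\wedge Y)W,\,Z) = R^{f}(X,Y,W,Z) = -R^{f}(X,Y,Z,W) = -g(\mathcal{R}^{f}(X\wedge Y)Z,\,W).
\]

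Second, since every $\xi\in\Lambda^{2}(M)$ is a finite $C^\infty(M)$-linear combination of decomposable elements $X_{\alpha}\wedge Y_{\alpha}$, and since the operator $\mathcal{R}^{f}\colon\Lambda^{2}(M)\to\operatorname{End}(TM)$ is defined to be linear in $\xi$, the skew-symmetry in $(Z,W)$ passes summand by summand from the decomposable case to the general case. One only has to observe that the identity is bilinear in $Z,W$ and linear in $\xi$, so no further consistency check is needed.

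I do not expect a serious obstacle here: the substantive work already took place in Proposition~\ref{P-RP-properties}(v), where condition \eqref{E-cond-E2} was translated via the definition of $\Theta_{X,Y}$ into the skew-symmetry of $R^{f}_{X,Y}$ as a $(1,1)$-endomorphism. The only care needed at the present stage is to respect the sign convention built into the identification $g(\mathcal{R}^{f}(X\wedge Y)Z,W)=R^{f}(X,Y,Z,W)$, so that the transposition swapping $Z$ and $W$ on the left-hand side matches the transposition of the last two slots of $R^{f}$ on the right.
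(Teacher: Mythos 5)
Your proposal is correct and follows essentially the same route as the paper: both reduce to decomposable bivectors $X\wedge Y$, identify $g(\mathcal{R}^{f}(X\wedge Y)Z,W)$ with $R^{f}(X,Y,Z,W)$ via the identity stated before the lemma, and invoke the skew-symmetry in the last two slots guaranteed under \eqref{E-cond-E2} by Proposition~\ref{P-RP-properties} (the paper re-expands this as $R^{g}(fX,fY,Z,W)+g(\Theta_{X,Y}Z,W)$ and uses the skew-symmetry of each summand, which is the content of part (v) you cite), then extends by linearity.
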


\begin{proof}
It suffices to check the statement for the generators.
Note that by condition \eqref{E-cond-E2}, $K_X$ is skew-symmetric.
Hence, using Proposition~\ref{P-RP-properties} and the conditions,
we get
\begin{align*}
 & g({\cal R}^{f}(X\wedge Y) Z,\ W) = R^{g}({f}X,{f}Y,Z,W) +g(\Theta_{X,Y}Z,\ W) \\
 & = -R^{g}({f}X,{f}Y,W,Z) -g(\Theta_{X,Y}W,\ Z) = -g({\cal R}^{f}(X\wedge Y) W,\ Z).
\end{align*}
Thus, the statement follows.
\end{proof}

The associated ${f}$-\textit{curvature operator} is given by
\[
 g({\cal R}^{f}(X\wedge Y),\, Z\wedge W) = R^{g}({f}X,{f}Y,W,Z) -g(\Theta_{X, Y}Z, W).
\]
To simplify calculations, we assume that $\mathfrak{so}(TM)$ is endowed with metric induced from $\Lambda^{2}(M)$. If~$L\in\mathfrak{so}(TM)$, then
\begin{equation}\label{E-L-S}
 (L\,S)(X_1,\ldots, X_k) = -\sum\nolimits_{\,i} S(X_1,\ldots,L(X_i),\ldots, X_k).
\end{equation}
Let $\{\xi_{a}\}$ be an orthonormal base of skew-symmetric transformations such that $(\xi_{a})_{x}\in\mathfrak{so}(T_{x}M)$
for $x$ in an open set $U\subset M$.
By \eqref{E-L-S}, for any $(0,k)$-tensor~$S$,
\[
 (\xi_\alpha S)(X_1,\ldots, X_k) = -\sum\nolimits_{\,i} S(X_1,\ldots,\xi_\alpha(X_i),\ldots, X_k);
\]
The ${\cal R}^{f}(X\wedge Y)$ on $\Lambda^{2}(M)$ can be decomposed using $\{\xi_a\}$.

\begin{lemma}\label{L-0-3P1+}
Let $\nabla$ be an Einstein's connection satisfying \eqref{E-cond-E2}.
Then
\begin{eqnarray*}
 {\cal R}^{f}(X\wedge Y) \eq -\sum\nolimits_{\,\alpha } \big(g({\cal P}\,{\cal R}^{g}(\xi_{\alpha})X,Y)
 +g({\cal K}(X\wedge Y),\xi_{\alpha})\big)\xi_{\alpha} \\
 \eq -\sum\nolimits_{\,\alpha}\big(g({\cal R}^{g}(\xi_{\alpha}){f}X,{f}Y)+g({\cal K}(X\wedge Y),\xi_{\alpha})\big)\xi_{\alpha}.
\end{eqnarray*}
\end{lemma}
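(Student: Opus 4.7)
The plan is to view the statement as an expansion of ${\cal R}^f(X\wedge Y)$ in the orthonormal basis $\{\xi_\alpha\}$ of $\mathfrak{so}(TM)$. By Lemma~\ref{lmab+}, for every $\xi\in\Lambda^2(M)$ the map ${\cal R}^f(\xi)$ lies in $\mathfrak{so}(TM)$, so under the metric on $\mathfrak{so}(TM)$ induced from $\Lambda^2(M)$ one has the tautological basis expansion
\[
 {\cal R}^f(X\wedge Y) = \sum\nolimits_\alpha g({\cal R}^f(X\wedge Y),\,\xi_\alpha)\,\xi_\alpha,
\]
up to the overall sign dictated by the bivector--endomorphism identification fixed in the paragraph immediately preceding Lemma~\ref{lmab+}; this is what accounts for the leading minus sign in the claim. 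Thus it suffices to compute the scalar coefficient $g({\cal R}^f(X\wedge Y),\,\xi_\alpha)$ and display the answer in the two equivalent forms appearing on the right-hand side.

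The definition ${\cal R}^f := {\cal R}^g{\cal P} + {\cal K}$ splits the coefficient as
\[
 g({\cal R}^f(X\wedge Y),\,\xi_\alpha) = g({\cal R}^g{\cal P}(X\wedge Y),\,\xi_\alpha) + g({\cal K}(X\wedge Y),\,\xi_\alpha),
\]
so only the first summand requires work. I would invoke two self-adjointness facts in turn. First, the Riemannian curvature operator ${\cal R}^g$ on $\Lambda^2(M)$ is self-adjoint (a standard consequence of the symmetries of the Riemann tensor), which lets me shift it across the inner product to obtain $g({\cal P}(X\wedge Y),\,{\cal R}^g(\xi_\alpha))=g(fX\wedge fY,\,{\cal R}^g(\xi_\alpha))$. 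Second, ${\cal P}$ is self-adjoint by construction (noted just after its definition), which allows me to rewrite the same quantity instead as $g(X\wedge Y,\,{\cal P}{\cal R}^g(\xi_\alpha))$.

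Re-expressing these two bivector pairings in endomorphism form via the identification of Lemma~\ref{lmab+}, the first reads $g({\cal R}^g(\xi_\alpha)fX,\,fY)$, yielding the second expression of the lemma, while the second reads $g({\cal P}{\cal R}^g(\xi_\alpha)X,\,Y)$, yielding the first. The equality of the two is transparent from ${\cal P}^*={\cal P}$. Given Lemma~\ref{lmab+}, the definition of ${\cal R}^f$, and the self-adjointness of ${\cal R}^g$ and ${\cal P}$, the argument is essentially formal and requires no local frame computation. The only delicate point --- and the one most likely to demand care in a careful write-up --- is keeping the sign convention for the isomorphism $\Lambda^2(M)\cong\mathfrak{so}(TM)$ consistent throughout, since this governs the overall minus sign in front of the sum.
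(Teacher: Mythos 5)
Your proposal takes essentially the same route as the paper: expand ${\cal R}^{f}(X\wedge Y)$ in the orthonormal basis $\{\xi_{\alpha}\}$, split the coefficient via ${\cal R}^{f}={\cal R}^{g}{\cal P}+{\cal K}$, and move ${\cal P}{\cal R}^{g}$ across the inner product using self-adjointness of ${\cal R}^{g}$ and ${\cal P}$ (the paper phrases this as $({\cal R}^{f})^{*}={\cal P}\,{\cal R}^{g}$ together with Lemma~\ref{lmab+}). The only cosmetic difference is where the minus sign is booked: it arises not from the basis expansion itself but from converting the bivector pairing into an endomorphism evaluation, $g(L,X\wedge Y)=g(LY,X)=-g(LX,Y)$ for skew-symmetric $L$, which is exactly the paper's final step.
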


\proof Using $({\cal R}^{f})^*={\cal P}{\cal R}^{g}$ and Lemma~\ref{lmab+}, we have:
\begin{eqnarray*}
 {\cal R}^{f}(X\wedge Y) \eq \sum\nolimits_{\,\alpha }
 g({\cal R}^{f}(X\wedge Y),\xi_{\alpha}) \,\xi_{\alpha}\\
 \eq\sum\nolimits_{\,\alpha}\big(g({\cal P}{\cal R}^{g}(\xi_{\alpha}),X\wedge Y)
 +g({\cal K}(X\wedge Y),\xi_{\alpha})\big)\xi_{\alpha} \\
 \eq -\sum\nolimits_{\,\alpha}\big(g({\cal R}^{g}(\xi_{\alpha}){f}X,{f}Y)
 +g({\cal K}(X\wedge Y),\xi_{\alpha})\big)\xi_{\alpha},
 \hskip 30mm\Box
\end{eqnarray*}

\noindent\
Lemma~\ref{L-0-3P1+} allows us to rewrite the Weitzenb\"{o}ck
$f$-curvature operator \eqref{E-Ric-P}.

\begin{proposition}
Let $\nabla$ be an Einstein's connection satisfying \eqref{E-cond-E2},
and $S$ a $(0,k)$-tensor on $(M,G=g+F)$. Then
\begin{align}\label{Eq-prop10}
 \Re^{f}(S)=-\sum\nolimits_{\,\alpha}{\cal R}^{f}(\xi_{a})(\xi_{a}S),\qquad
 (\Re^{f}(S))^*=\Re^{-{f}}(S).
\end{align}
\end{proposition}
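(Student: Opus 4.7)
The plan is to generalize the classical Weitzenb\"ock derivation (e.g., \cite{Peter,rp-2}) from the Levi-Civita setting to the present $f$-connection setting. The key structural observation, ensured by assumption \eqref{E-cond-E2} via Proposition~\ref{P-RP-properties}(i),(v), is that $R^f_{e_i,X_a}:TM\to TM$ is skew-symmetric and hence lies in $\mathfrak{so}(T_xM)$. Expanding in the orthonormal basis $\{\xi_\alpha\}$,
$$R^f_{e_i,X_a}=\sum\nolimits_{\alpha} g\bigl(\mathcal{R}^f(e_i\wedge X_a),\xi_\alpha\bigr)\,\xi_\alpha,$$
I~substitute into \eqref{E-Ric-P} and use \eqref{E-L-S} to realise each $\xi_\alpha\in\mathfrak{so}(TM)$ as a derivation on $S$. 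This turns $\Re^f(S)(X_1,\ldots,X_k)$ into a triple sum over $(a,i,\alpha)$.

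Next, for fixed $\alpha$, I~perform the standard reindexing of the inner double sum over $(i,a)$: by the pairing $g(\mathcal{R}^f(X\wedge Y),Z\wedge W)=R^f(X,Y,W,Z)$ (recorded just before Section~\ref{sec:Ric}) together with the skew-symmetry of $\xi_\alpha$ and of $\mathcal{R}^f(\xi_\alpha)$ (Lemma~\ref{lmab+}), this sum collapses to $-\mathcal{R}^f(\xi_\alpha)(\xi_\alpha S)(X_1,\ldots,X_k)$. Summing over~$\alpha$ yields the first identity of \eqref{Eq-prop10}.

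For the adjoint identity $(\Re^f(S))^*=\Re^{-f}(S)$, I~regard $\Re^f$ as a pointwise linear endomorphism of the space of $(0,k)$-tensors and compute its $g$-adjoint. Since $\mathcal{P}(X\wedge Y)=fX\wedge fY$ is self-adjoint and quadratic in $f$ (hence invariant under $f\mapsto -f$), while the $\mathcal{K}$-contribution flips sign in the passage $\mathcal{R}^f=\mathcal{R}^g\mathcal{P}+\mathcal{K}\mapsto\breve{\mathcal{R}}^f=\mathcal{R}^g\mathcal{P}-\mathcal{K}$, the $g$-adjoint of $\mathcal{R}^f(\xi_\alpha)(\xi_\alpha\,\cdot\,)$ matches the corresponding operator built with $-f$ in place of $f$ throughout the construction. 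Combined with the first identity, this yields the claim.

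The most delicate step is the reindexing in the first identity: one must correctly identify the trace over $(e_i,X_a)$ with the evaluation of $\mathcal{R}^f$ on the corresponding bivector and check signs carefully, as spurious cross terms would break the identity. The adjoint claim is then essentially formal, but depends on the clean behaviour of $K$, $\Theta$, and $\mathcal{K}$ under the involution $f\mapsto -f$ in the Einstein connection framework, which one verifies by tracking this substitution through \eqref{E-tordfnew}.
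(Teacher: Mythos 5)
For the first identity your route is essentially the paper's own: the paper also expands the skew\--symmetric operator ${\cal R}^{f}(e_j\wedge X_i)=R^{f}_{e_j,X_i}$ in the orthonormal frame $\{\xi_\alpha\}$ of $\Lambda^{2}(M)\cong\mathfrak{so}(TM)$, substitutes into \eqref{E-Ric-P} through the action \eqref{E-L-S}, and collapses the frame sum, following \cite[Lemma~9.3.3]{Peter}; the expansion coefficients are supplied by Lemma~\ref{L-0-3P1+}, whose proof is exactly where Lemma~\ref{lmab+} and the bivector pairing enter. So your first part is the same computation, modulo the sign bookkeeping you yourself flag, and needs no further comment.

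For the adjoint identity $(\Re^{f}(S))^{*}=\Re^{-f}(S)$ your plan has a genuine gap. The paper never tracks the substitution $f\mapsto-f$ through the structure tensors: it fixes an eigenbasis $\{\xi_\alpha\}$ with ${\cal R}^{g}(\xi_\alpha)=\lambda_\alpha\xi_\alpha$, uses the first identity, the skew\--adjointness of the $\xi_\alpha$\--action \eqref{E-L-S} on tensors and the self\--adjointness of ${\cal P}$ and ${\cal R}^{g}$, and computes the two bilinear forms $g(\Re^{f}(S_2),S_1)$ and $g(S_2,\Re^{-f}(S_1))$ directly, matching them term by term in \eqref{E-RicP-SS}; the only mechanism is that taking the adjoint replaces ${\cal K}$ by ${\cal K}^{*}$. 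Your proposed verification --- that ${\cal P}$ is invariant while ``the ${\cal K}$\--contribution flips sign'', to be checked by pushing $f\mapsto-f$ through \eqref{E-tordfnew} --- does not go through as stated: in \eqref{E-tordfnew} the term $T(X,Y,Z)$ is independent of $f$, so with the torsion held fixed $K$ (hence $\Theta$ and ${\cal K}$) does not simply change sign; if instead one lets the connection change because $G=g+F$ becomes $g-F$, then $T$ itself changes and nothing is controlled without a separate argument. Likewise, your identification of the ``$-f$'' operator with $\breve{\cal R}^{f}={\cal R}^{g}{\cal P}-{\cal K}$ is an unproven step, not a formality. To close the argument you should replace this symmetry heuristic by the paper's explicit adjoint computation in the eigenbasis of ${\cal R}^{g}$ (or prove the claimed behaviour of $K$, $\Theta$, ${\cal K}$ under $f\mapsto-f$ as a separate lemma).
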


\begin{proof}
We prove \eqref{Eq-prop10}$_1$ using similar arguments as in the proof of \cite[Lemma~9.3.3]{Peter}:
\begin{align*}
 &\quad \Re^{f}(S)(X_{1},\ldots ,X_{k}) =\sum\nolimits_{\,i,j}({\cal R}^{f}(e_{j}\wedge X_{i})S)
   (\underbrace{X_{1},\ldots ,e_{j}}_i,\ldots ,X_{k}) \\
 &  =-\sum\nolimits_{\,i,j,\alpha}
   \big(g({\cal P}\,{\cal R}^{g}(\xi_{\alpha})e_{j},X_{i})+g({\cal K}(e_{j}\wedge X_{i}),\xi_{\alpha})\big)
   (\xi_{\alpha}S)(X_{1},\ldots ,e_{j},\ldots ,X_{k}) \\
 & =-\sum\nolimits_{\,i,j,\alpha}(\xi_{\alpha}S)(X_{1},\ldots,
 \big(g({\cal P}{\cal R}^{g}(\xi_{\alpha})e_{j},X_{i}) e_{j}+g({\cal K}(e_{j}\wedge X_{i}),\xi_{\alpha})\big),\ldots ,X_{k}) \\
 & =-\sum\nolimits_{\,i,j,\alpha}(\xi_{\alpha}S)(X_{1},\ldots,g(e_{j},{\cal R}^{f}(\xi_{\alpha})X_{i}) e_{j},\ldots ,X_{k}) \\
 & =-\sum\nolimits_{\,i,\alpha}(\xi_{\alpha}S)(X_{1},\ldots,{\cal R}^{f}(\xi_{\alpha})X_{i},\ldots ,X_{k})
  =-\sum\nolimits_{\,\alpha}({\cal R}^{f}(\xi_{\alpha})(\xi_{\alpha}S))(X_{1},\ldots ,X_{k}).
\end{align*}%
Since ${\cal R}^{g}:\Lambda^{2}(M)\to \Lambda^{2}(M)$ is self-adjoint, there is a local orthonormal base $\{\xi_{a}\}$ of $\Lambda^{2}(M)$
such that ${\cal R}^{g}(\xi_{a})=\lambda_{a}\,\xi_{a}$. For this base and any $(0,k)$-tensors $S_1$ and $S_2$, we~get
\begin{eqnarray}\label{E-RicP-SS}
\nonumber
 g(\Re^{f}(S_2),\, S_1) \eq-\sum\nolimits_{\,\alpha}
 g({\cal R}^{f}(\xi_{\alpha})(\xi_{\alpha}S_2),S_1)
   =-\sum\nolimits_{\,\alpha }g(\xi_{\alpha}S_2,\,({\cal R}^{f})^*(\xi_{\alpha})S_1) \\
\nonumber
 \eq \sum\nolimits_{\,\alpha}g(\xi_{\alpha}S_2,\,({\cal P}{\cal R}^{g} +{\cal K})(\xi_{\alpha})(S_1)) \\
 \eq \sum\nolimits_{\,\alpha}\lambda_{\alpha}g({\cal P}(\xi_{\alpha}S_2),\,\xi_{\alpha}S_1)
 + \sum\nolimits_{\,\alpha}g({\cal K}^*(\xi_{\alpha}S_2),\,\xi_{\alpha}S_1),
\end{eqnarray}
and, similarly,
\begin{eqnarray*}
 g(S_2,\,\Re^{-{f}}(S_1)) = \sum\nolimits_{\,\alpha}\lambda_{\alpha}
 g(\,\xi_{\alpha}S_2,\,{\cal P}(\xi_{\alpha}S_1))
 + \sum\nolimits_{\,\alpha} g(\xi_{\alpha}S_2,\,{\cal K}(\xi_{\alpha}S_1)) \\
 =\sum\nolimits_{\,\alpha}\lambda_{\alpha}g({\cal P}(\xi_{\alpha}S_2),\,\xi_{\alpha}S_1)
 + \sum\nolimits_{\,\alpha}g({\cal K}^*(\xi_{\alpha}S_2),\,\xi_{\alpha}S_1).
\end{eqnarray*}
Thus, \eqref{Eq-prop10}$_2$ follows.
\end{proof}

The following result generalizes
\cite[Corollary 9.3.4]{Peter}.

\begin{proposition}\label{P-RP-ge0}
Let $\nabla$ be an Einstein's connection on a manifold $(M,G=g+F)$ satisfying \eqref{E-cond-E2}.

a)~If $g({\cal R}^{f}(S),S)\ge 0$ for any $(0,k)$-tensor $S$, then $g(\Re^{f}(S),S)\ge 0$.

b)~If $g({\cal R}^{f}(S),S)\ge -\varepsilon\,\|S\|^{2}$ for any $(0,k)$-tensor $S$, where $\varepsilon>0$, then

\quad $g({\Re}^{\,{f}}(S),S)\ge -\varepsilon\,C\,\|S\|^{2}$,
where a constant $C$ depends only on the type of~$S$.
\end{proposition}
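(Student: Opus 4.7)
The plan is to invoke the identity \eqref{Eq-prop10}$_1$, $\Re^{f}(S) = -\sum_\alpha {\cal R}^{f}(\xi_\alpha)(\xi_\alpha S)$, and pair both sides with $S$ in the pointwise tensor inner product. First, I would apply Lemma~\ref{lmab+} (the endomorphism ${\cal R}^{f}(\xi_\alpha)$ of $TM$ is skew-symmetric under the assumption \eqref{E-cond-E2}) together with the extension \eqref{E-L-S} of a skew endomorphism of $TM$ to $(0,k)$-tensors; under this extension the operator remains skew-adjoint in the pointwise tensor inner product. This converts the pairing into
\[
 g(\Re^{f}(S),\,S)=\sum\nolimits_{\,\alpha}g(\xi_\alpha S,\,{\cal R}^{f}(\xi_\alpha) S).
\]

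Next, I would choose the local orthonormal basis $\{\xi_\alpha\}$ of $\mathfrak{so}(T_xM)\cong\Lambda^2(T_xM)$ so as to diagonalize the \emph{symmetric part} of ${\cal R}^{f}$, with eigenvalues $\mu_\alpha$. The key observation is that since $g(\xi_\alpha S,\,\xi_\beta S)$ is symmetric in $\alpha,\beta$, only the symmetric part of the matrix representation of ${\cal R}^{f}$ contributes to the quadratic form above; hence the sum reduces to $\sum_\alpha \mu_\alpha\,\|\xi_\alpha S\|^{2}$. The hypothesis of part (a), read as positivity of the induced quadratic form on $(0,k)$-tensors, then forces $\mu_\alpha\ge 0$, and each summand is non-negative; this proves (a).

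For part (b), the same reduction yields $g(\Re^{f}(S),\,S)\ge -\varepsilon\sum_\alpha\|\xi_\alpha S\|^{2}$. Since $\{\xi_\alpha\}$ is an orthonormal basis of the fixed finite-dimensional space $\mathfrak{so}(T_xM)$ and acts on $(0,k)$-tensors via \eqref{E-L-S}, the ratio $\sum_\alpha\|\xi_\alpha S\|^{2}/\|S\|^{2}$ is a continuous function on the unit sphere of $(0,k)$-tensors and is therefore uniformly bounded by a constant $C=C(n,k)$ depending only on the rank $k$ and $n=\dim M$, i.e., on the type of $S$. This gives the estimate $g(\Re^{f}(S),\,S)\ge -\varepsilon\,C\,\|S\|^{2}$.

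The hard part will be formalizing the hypothesis: ${\cal R}^{f}$ is originally defined on $\Lambda^{2}(M)$ and, unlike the Riemannian ${\cal R}^{g}$, is \emph{not} self-adjoint because of the ${\cal K}$-term. One must make precise how $g({\cal R}^{f}(S),S)$ is interpreted on a general $(0,k)$-tensor $S$, and argue that the hypothesis captures exactly the symmetric part of ${\cal R}^{f}$ that survives in $g(\Re^{f}(S),\,S)$. The whole argument parallels the proof of Corollary~9.3.4 in \cite{Peter}, the novelty being the automatic cancellation of the skew part of ${\cal K}$ thanks to the $(\alpha,\beta)$-symmetry of $g(\xi_\alpha S,\xi_\beta S)$.
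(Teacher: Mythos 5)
Your algebraic setup is sound and starts exactly where the paper does: \eqref{Eq-prop10}$_1$, Lemma~\ref{lmab+} and the skew-adjointness of the extension \eqref{E-L-S} give
$g(\Re^{f}(S),S)=\sum_{\alpha}g(\xi_{\alpha}S,\,{\cal R}^{f}(\xi_{\alpha})S)$,
and the constant $C$ with $\sum_{\alpha}\|\xi_{\alpha}S\|^{2}\le C\,\|S\|^{2}$ is the same one from \cite[Corollary~9.3.4]{Peter} that the paper invokes. Your further reduction (expand ${\cal R}^{f}(\xi_{\alpha})$ in the basis $\{\xi_{\beta}\}$, use the symmetry of the Gram matrix $g(\xi_{\alpha}S,\xi_{\beta}S)$, diagonalize the symmetric part of ${\cal R}^{f}$) correctly yields $\sum_{\alpha}\mu_{\alpha}\|\xi_{\alpha}S\|^{2}$. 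The gap is the next sentence: you assert that the hypothesis ``forces $\mu_{\alpha}\ge0$'', which is precisely the step you yourself postpone as ``the hard part'' and never supply. The hypothesis is a positivity statement about $(0,k)$-tensors $S$, and no argument is given that it controls the eigenvalues of the symmetric part of ${\cal R}^{f}$ on $\Lambda^{2}(M)$; so, as written, the only analytic input of the proposition is never legitimately used.

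The paper's proof avoids this detour. Using \eqref{E-RicP-SS} with $S_{1}=S_{2}=S$ and $\{\xi_{\alpha}\}$ an eigenbasis of the self-adjoint ${\cal R}^{g}$, it reorganizes the pairing into
$g(\Re^{f}(S),S)=\sum_{\alpha}g({\cal R}^{f}(\xi_{\alpha}S),\,\xi_{\alpha}S)$,
a sum of terms to which the hypothesis applies verbatim, since each $\xi_{\alpha}S$ is itself a $(0,k)$-tensor: part (a) is then immediate term by term, and part (b) follows from $g({\cal R}^{f}(\xi_{\alpha}S),\xi_{\alpha}S)\ge-\varepsilon\,\|\xi_{\alpha}S\|^{2}$ together with $\sum_{\alpha}\|\xi_{\alpha}S\|^{2}\le C\,\|S\|^{2}$. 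No spectral analysis of the non-self-adjoint ${\cal R}^{f}$ is needed at all. To close your argument, either follow the paper and apply the hypothesis directly to the tensors $\xi_{\alpha}S$ (dropping the symmetric-part diagonalization), or state explicitly that you reinterpret the hypothesis as a lower bound for the quadratic form of ${\cal R}^{f}$ on $\Lambda^{2}(M)$; under that reading your $\mu_{\alpha}$-argument does close, but it is then a reinterpretation of the hypothesis rather than the statement as the paper uses it.
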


\begin{proof}
By \eqref{E-RicP-SS}, for a local orthonormal base $\{\xi_{\alpha}\}$ of $\Lambda^{2}(M)$ such that ${\cal R}^g(\xi_{\alpha})=\lambda_{\alpha}\xi_{\alpha}$, we~get
\begin{eqnarray*}
 g(\Re^{f}(S),S)
 \eq \sum\nolimits_{\,\alpha}\lambda_{\alpha}g({\cal P}(\xi_{\alpha}S),\,\xi_{\alpha}S)
 +\sum\nolimits_{\,\alpha}g({\cal K}(\xi_{\alpha}S),\,\xi_{\alpha}S)\\
 \eq \sum\nolimits_{\,\alpha}g({\cal P}(\xi_{\alpha}S),\,{\cal R}^g(\xi_{\alpha}S))
 +\sum\nolimits_{\,\alpha}g({\cal K}(\xi_{\alpha}S),\,\xi_{\alpha}S) \\
 \eq \sum\nolimits_{\,\alpha}g({\cal R}^{f}(\xi_{\alpha}S),\,\xi_{\alpha}S).
\end{eqnarray*}
By conditions, $g({\cal R}^{f}(\xi_{\alpha}S),\,\xi_{\alpha}S)\ge 0$ for all $\alpha$,
thus, $g(\Re^{f}(S),S)\ge 0$, and the first claim~follows.
There is a constant $C>0$ depending only on the type of the tensor and $\dim M$ such that
 $C\|S\|^2\ge \sum\nolimits_{\,\alpha}\|\xi_{\alpha}S\|^2$,
see~\cite[Corollary~9.3.4]{Peter}.
By conditions, $g({\cal R}^{f}(\xi_{\alpha}S),\,\xi_{\alpha}S)\ge -\varepsilon\,\|\xi_{\alpha}S\|^2$ for all~$\alpha$.
The~above
 yields
$g({\cal R}^{f}(\xi_{\alpha}S),\,\xi_{\alpha}S)\ge
-\varepsilon\,C\,\|S\|^{2}$ -- thus, the second~claim.
\end{proof}


\begin{theorem}
Let conditions \eqref{E-cond-E2}
and \eqref{E-cond-PP-stat} hold for an Einstein's connection $\nabla$ on a closed manifold $(M,G=g+F)$.
If $g({\cal R}^{f}(\omega),\omega)\ge 0$ for an
${f}$-harmonic form $\omega\in\Lambda^k(M)$, then~$\nabla^{f}\omega=0$.
\end{theorem}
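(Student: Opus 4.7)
The plan is to apply the Weitzenböck type decomposition formula \eqref{E-Wei} to the $f$-harmonic form $\omega$ and then pair both sides with $\omega$ in the $L^2$ inner product. Since $\omega$ is $f$-harmonic, $\Delta^{f}_H\,\omega=0$, so \eqref{E-Wei} yields the pointwise identity
\[
 \breve\nabla^{*{f}}\,\nabla^{f}\omega + \Re^{f}(\omega) = 0 .
\]
The strategy is to show that after integration against $\omega$, both summands are nonnegative, so both must vanish — in particular $\nabla^{f}\omega=0$.

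For the first summand, I would use condition \eqref{E-cond-E2} together with Lemma~\ref{L-03} to identify $\breve\nabla^{*{f}}=\nabla^{*{f}}$. Then, since \eqref{E-cond-E2} and \eqref{E-cond-PP-stat} both hold, the integration-by-parts formula \eqref{E-cond-PP-intB} (proved via the Divergence Theorem~\ref{T-P-Stokes}) applies and gives, on the closed manifold $M$,
\[
 (\breve\nabla^{*{f}}\nabla^{f}\omega,\,\omega)_{L^2}
 = (\nabla^{*{f}}\nabla^{f}\omega,\,\omega)_{L^2}
 = (\nabla^{f}\omega,\,\nabla^{f}\omega)_{L^2}
 = \|\nabla^{f}\omega\|_{L^2}^{2}\ge 0 .
\]

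For the curvature summand, I would invoke Proposition~\ref{P-RP-ge0}(a). The hypothesis $g(\mathcal{R}^{f}(\omega),\omega)\ge 0$ — understood in the pointwise sense that the algebraic $f$-curvature operator is nonnegative on the relevant $(0,k)$-tensors — feeds directly into the identity established in the proof of that proposition, namely
\[
 g(\Re^{f}(\omega),\omega)=\sum\nolimits_{\,\alpha} g(\mathcal{R}^{f}(\xi_{\alpha}\omega),\,\xi_{\alpha}\omega)\ge 0 .
\]
Integrating gives $(\Re^{f}(\omega),\omega)_{L^2}\ge 0$.

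Combining the two estimates, the equation $\|\nabla^{f}\omega\|_{L^2}^{2}+(\Re^{f}(\omega),\omega)_{L^2}=0$ forces each nonnegative term to vanish, and in particular $\nabla^{f}\omega=0$. The main obstacle I anticipate is the bookkeeping in the curvature step: one has to be careful that the hypothesis on $\mathcal{R}^{f}(\omega)$ is strong enough to control every $\xi_{\alpha}\omega$ appearing in the decomposition of $\Re^{f}$ from Proposition~\ref{P-RP-ge0}, which is precisely why \eqref{E-cond-E2} (ensuring skew-symmetry of $\mathcal{R}^{f}$ via Lemma~\ref{lmab+}) is essential — without it one cannot express $\Re^{f}$ as a sum of $\mathcal{R}^{f}$-evaluations at all.
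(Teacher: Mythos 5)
Your proposal is correct and follows essentially the same route as the paper: Weitzenböck formula \eqref{E-Wei} with $\Delta^{f}_H\,\omega=0$, curvature nonnegativity of $\Re^{f}$ via Proposition~\ref{P-RP-ge0}(a), and the integration-by-parts step on the closed manifold forcing $\nabla^{f}\omega=0$. The only cosmetic difference is that the paper packages your $L^2$ argument (via \eqref{E-cond-PP-intB} and Theorem~\ref{T-P-Stokes}) as a citation of Proposition~\ref{P-max}, whereas you unfold it inline.
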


\begin{proof} By conditions and Proposition~\ref{P-RP-ge0}(a), $g(\Re^{f}(\omega),\omega)\ge 0$ is true.
By \eqref{E-Wei}, since $\Delta^{f}_H\,\omega=0$, we get $g(\breve\nabla^{*{f}}\,\nabla^{f}\omega,\omega)\le0$.
By Proposition~\ref{P-max}, where \eqref{E-cond-PP-stat} is used, we have $\nabla^{f}\omega=0$.
\end{proof}

\section*{\large Conclusion}

\noindent
The main contribution of this paper (four theorems and ten propositions)
is the development of Bochner's technique for a manifold $(M,G=g+F)$ with an Einstein's connection.
To generalize some geometrical analysis tools for an Einstein's connection, we assume some additional conditions, for example, \eqref{E-cond-E2} and \eqref{E-cond-PP-stat}, for $g$, $F$ and the difference tensor $K$,
give an example of the Einstein's connection.
We~define the $f$-curvature operator and the Weitzenb\"{o}ck $f$-curvature operator for NGT spaces, and prove vanishing theorems on the null space of the Bochner and Hodge $f$-Laplacians. We delegate the following for further study:
considering NGT spaces associated with special classes of weak metric $f$-manifolds, see \cite{rov-survey24,rst-139,rst-63}, and find more applications in NGT and mathematical physics.

\bigskip

\noindent {\bf Acknowledgments}.
The author expresses his sincere gratitude to
Professor Milan Zlatanovi\'c (University of Ni\v s) for numerous fruitful discussions of NGT and its applications. His comments significantly improved both the accuracy and clarity of the manuscript.

\end{document}